\documentclass[onefignum,onetabnum]{siamart190516}


\definecolor{officegreen}{rgb}{0.0, 0.5, 0.0}
\newcommand{\YY}[2][red]{{\textcolor{#1}{#2}}}


\usepackage{lipsum}
\usepackage{amsfonts}
\usepackage{graphicx}
\usepackage{epstopdf}
\usepackage{algorithmic}
\usepackage{hyperref}
\usepackage{bm}
\usepackage{comment}
\usepackage{multirow}

\newcommand{\mcH}{\mathcal{H}}
\newcommand{\mcU}{\mathcal{U}}
\newcommand{\mcO}{\mathcal{O}}

\newcommand{\mcX}{\mathcal{X}}
\newcommand{\mcN}{\mathcal{N}}
\newcommand{\mcL}{\mathcal{L}}

\newcommand{\mbR}{\mathbb{R}}
\newcommand{\mbRd}{{\mathbb{R}^d}}

\newcommand{\mcLn}{\mathcal{L}^{N\!L}}
\newcommand{\mcLp}{\mathcal{L}^{L\!P\!S}}

\newcommand{\omg}{{\Omega}}
\newcommand{\omgi}{{\Omega_I}}

\newcommand{\oomg}{{\overline\Omega}}

\newcommand{\ds}{\displaystyle}

\def \bb{\mathbf{b}}

\def \nub{{\boldsymbol \nu}}

\def \sbb{\mathbf{s}}
\def \ub{\mathbf{u}}
\def \wb{\mathbf{w}}
\def \vb{\mathbf{v}}
\def \xb{\bm{x}}
\def \yb{\bm{y}}

\def \zerob{\mathbf{0}}

\def \sl{{s_l}}
\def \ul{{u_l}}
\def \un{{u_n}}

\def \wun{{\widetilde u_n}}

\ifpdf
  \DeclareGraphicsExtensions{.eps,.pdf,.png,.jpg}
\else
  \DeclareGraphicsExtensions{.eps}
\fi


\newsiamremark{remark}{Remark}
\newsiamremark{hypothesis}{Hypothesis}
\crefname{hypothesis}{Hypothesis}{Hypotheses}
\newsiamthm{claim}{Claim}

\headers{Prescription of boundary conditions for nonlocal problems}{M. D'Elia and Y. Yu}

\title{On the prescription of boundary conditions for nonlocal diffusion and peridynamics models 
}

\author{Marta D'Elia\thanks{Sandia National Laboratories, Livermore, CA 
  (\email{mdelia@sandia.gov}).}
\and Yue Yu\thanks{Lehigh University, Bethlehem, PA 
  (\email{yuy214@lehigh.edu}).}}

\usepackage{amsopn}


\ifpdf
\hypersetup{
  pdftitle={On the prescription of boundary conditions for nonlocal Poisson's and peridynamics models},
  pdfauthor={M. D'Elia and Y. Yu}
}
\fi

\begin{document}

\maketitle

\begin{abstract}
We introduce a technique to automatically convert local boundary conditions into nonlocal volume constraints for nonlocal Poisson's and peridynamic models. The proposed strategy is based on the approximation of nonlocal Dirichlet or Neumann data with a local solution obtained by using available boundary, local data. The corresponding nonlocal solution converges quadratically to the local solution as the nonlocal horizon vanishes, making the proposed technique asymptotically compatible. The proposed conversion method does not have any geometry or dimensionality constraints and its computational cost is negligible, compared to the numerical solution of the nonlocal equation. The consistency of the method and its quadratic convergence with respect to the horizon is illustrated by several two-dimensional numerical experiments conducted by meshfree discretization for both the Poisson's problem and the linear peridynamic solid model. 
\end{abstract}

\begin{keywords}
Nonlocal models, peridynamics, nonlocal boundary conditions, convergence to local limits, asymptotic behavior of solutions, meshfree discretization.
\end{keywords}

\begin{AMS}
34B10, 35B40, 45A05, 45K05, 65M75, 74A70, 76R50.
\end{AMS}

\section{Introduction and motivation}\label{sec:introduction}
Nonlocal, integral models are valid alternatives to classical partial differential equations (PDEs) to describe systems where small scale effects or interactions affect the global behavior. In particular, nonlocal models are characterized by integral operators that embed length scales in their definitions, allowing to capture long-range space interactions. Furthermore, the integral nature of such operators reduces the regularity requirements on the solutions that are allowed to feature discontinuous or singular behavior. Applications of interest span a large spectrum of scientific and engineering fields, including
fracture mechanics \cite{Ha2011,Silling2000}, anomalous subsurface transport \cite{Benson2000,Gulian2021,Schumer2003,Schumer2001}, phase transitions \cite{Burkovska2021,Delgoshaie2015,Fife2003}, image processing \cite{Buades2010,DElia2021Imaging,Gilboa2007}, magnetohydrodynamics \cite{Schekochihin2008}, stochastic processes \cite{Burch2014,DElia2017,Meerschaert2012,MeKl00}, and turbulence \cite{DiLeoni2021,Pang2020npinns,Pang2019fPINNs}.

Despite their improved accuracy, the usability of nonlocal equations is hindered by several modeling and computational challenges that are the subject of very active research. Modeling challenges include the lack of a unified and complete nonlocal theory \cite{Defterli2015,DElia2020Helmholtz,DElia2020Unified}, the nontrivial treatment of nonlocal interfaces \cite{Alali2015,Capodaglio2020,Seleson2013,fan2021asymptotically,you2020asymptotically,yu2018partitioned} and the non-intuitive prescription of nonlocal boundary conditions \cite{DEliaNeumann2019,You_2019,trask2019asymptotically,yu2021asymptotically,Foss2021}. Computational challenges are due to the integral nature of nonlocal operators that yields discretization matrices that feature a much larger bandwidth compared to the sparse matrices associated with PDEs. For both variational methods \cite{Aulisa2021,Capodaglio2021DD,DElia-ACTA-2020,DEliaFEM2020} and meshfree methods \cite{Pasetto2019,Silling2005meshfree,Wang2010,XuFETI2021,trask2019asymptotically,You_2019,you2020asymptotically,fan2021asymptotically,yu2021asymptotically} a lot of progress has been made during the last decade, resulting in improved numerical techniques that facilitate wider adoption, even at the engineering level. 

In its simplest form, the action of a nonlocal (spatial) operator on a scalar function $u:\mbRd\to\mbR$ is defined as 
\begin{equation*}
    \mcL u(\xb) = \int_{\mcH_\delta(\xb)} I(\xb,\yb,u)\,d\yb,
\end{equation*}
where $\mcH_\delta(\xb)$ defines a nonlocal neighborhood of size $\delta$ surrounding a point $\xb\in\mbRd$, $d$ being the spatial dimension and $\delta$ the so-called horizon or interaction radius. The latter defines the extent of the nonlocal interactions and embeds the nonlocal operator with a characteristic length scale. The integrand function $I$ is application dependent and plays the role of a constitutive law. Its definition is not straightforward and represents one of the most investigated problems in nonlocal research \cite{Burkovska2021Learning,DElia2016ParamControl,Xu2021,You2021,You2020Regression}.

In this work we focus on the prescription of nonlocal boundary conditions, or volume constraints, when solving nonlocal equations in bounded domains. The challenge stems from the presence of nonlocal interactions, for which a point $\xb$ in a domain interacts with points outside of the domain that are contained in the point's neighborhood $\mcH_\delta(\xb)$. This fact generates an interaction region of nonzero measure where volume constraints need to be prescribed to guarantee the uniqueness of a nonlocal solution \cite{Du2013}. However, often times, input data to a problem are not available (due to measurement cost or physical impediments) in volumetric regions, whereas they are only available on the surfaces surrounding the domain. In other words, the only available data are {\it local}. Thus, the question arises of {\it how to convert local boundary information into a nonlocal volume constraint.} 

In the nonlocal literature, this issue has been addressed in several works, most of which propose conversion approaches that are either too restrictive (in terms of geometry or dimensionality constraints), too computationally expensive (requiring the solution of an optimization problem), or are not prone to wide usability (requiring a modification of available codes). Among these works we mention \cite{DEliaCoupling,You_2019,yu2021asymptotically,Foss2021}. 

The method we propose is inspired by the recent work \cite{DEliaNeumann2019} where the authors propose to first approximate the nonlocal solution with its local counterpart and then {\it correct} it by solving the nonlocal problem using the local solution to generate volume constraints. In \cite{DEliaNeumann2019} Neumann local boundary conditions are converted into Dirichlet or Neumann volume constraints in the context of nonlocal Poisson's problems and numerical tests are performed in one dimension. Based on this work we propose to convert Dirichlet local boundary conditions into Dirichlet or Neumann volume constraints in the context of both nonlocal Poisson's and peridynamics equations. Furthermore, we show applicability of our strategy in a two-dimensional setting using nontrivial geometries. 

The main idea of the proposed method can be summarized in three simple steps. 
\begin{enumerate}
\item Using available local data, we solve the local counterpart of the nonlocal problem. This step assumes that the local limit (the limit as $\delta\to 0$) of the nonlocal operator is known\footnote{Local limits of nonlocal operators can be obtained by using Taylor's expansion; both the nonlocal Poisson's problem and the peridynamic model considered in this work have well-known local limits, namely, the (local) Poisson's equation and the Navier equation of linear elasticity, respectively.}, that the local data is smooth enough to guarantee well-posedness, and that a solver for the corresponding local equation is available.
\item We use the local solution to define either the nonlocal Dirichlet data in the nonlocal interaction domain or to obtain the nonlocal Neumann data by computing the corresponding nonlocal flux. This step numerically corresponds to a matrix-vector multiplication and does not require the implementation of a new nonlocal (flux) operator; in fact, as we will explain later, the nonlocal Neumann operator is the nonlocal operator itself evaluated at points in the nonlocal interaction domain. \label{item:conversion}
\item Use either the Dirichlet or Neumann data obtained in {Step} \ref{item:conversion}. to solve the nonlocal problem, for which volume constraints are now available. 
\end{enumerate}

\smallskip \noindent
We summarize the main properties of the proposed approach below.
\begin{itemize}
    \item This strategy delivers a nonlocal solution that is physically consistent with PDEs in the limit of vanishing nonlocality. Numerically, {when employing proper numerical discretization methods, e.g., the optimization-based meshfree quadrature rule \cite{trask2019asymptotically,yu2021asymptotically},} this property guarantees {\it asymptotic compatibility} \cite{Tian2014}, i.e. the nonlocal numerical solution converges to its local limit as $\delta$ and the discretization size $h$ approach 0.
    \item This technique has no geometry or dimensionality constraints. It can be utilized with any domain shape and in all dimensions $d=1,2,3$.
    \item The conversion of local data into nonlocal volume constraints is inexpensive. In fact, it corresponds to a matrix-vector product where the matrix is either a selection matrix (in the Dirichlet case) or a nonlocal flux matrix (in the Neumann case).
    \item This strategy does not require the implementation of new software. In fact, available local and nonlocal solvers can be used as black boxes. 
\end{itemize}
Consequently, this strategy has the potential of dramatically increasing the usability of nonlocal models at the engineering and industry level thanks to its flexibility, intuitiveness, and ease of implementation.

\smallskip \noindent{\bf Paper outline} This paper is organized as follows. In the following section we describe the nonlocal Poisson's and linear peridynamic solid (LPS) models. For each of them, we introduce the strong and weak formulations and discuss conditions for their well-posedness. In Section \ref{sec:strategy} we illustrate the proposed strategies for the conversion of a local, Dirichlet boundary condition into a nonlocal Dirichlet (DtD strategy) or Neumann (DtN strategy) volume constraint. In Section \ref{sec:local-limit} we prove that both approaches deliver nonlocal solutions that are asymptotically compatible with the corresponding local solution of both the Poisson's and LPS problems. Specifically we prove that the nonlocal solution converges to the local one with quadratic rate. In Section \ref{sec:tests} we illustrate the properties of our methods with several two-dimensional numerical tests. In particular, we show that when the solutions are such that local and nonlocal operators are equivalent our procedure satisfies the consistency property (the nonlocal solution coincides with the local one). Furthermore, for both models and both approaches we confirm the quadratic convergence rate of the $L^2$-norm difference between local and nonlocal solutions. Finally, in Section 
\ref{sec:conclusion} we summarize our achievements.

\section{Preliminaries}\label{sec:preliminaries}
In this section we introduce the mathematical models used in this paper and recall relevant results. In what follows, scalar fields are indicated by italic symbols and vector fields by bold symbols. 
Let $\omg$ be a bounded open domain in $\mbRd$, $d=1,2,3$, with Lipschitz-continuous boundary $\partial\omg$. 

\subsection{The nonlocal Poisson's problem}\label{sec:poisson}
For the function $u(\xb)\colon \mbRd \to \mbR$ we define the nonlocal Laplacian $\mcLn\colon \mbRd \to \mbR$ of $u(\xb)$ as
\begin{equation}\label{eq:L}
\mcLn u(\xb) :=  2\int_\mbRd \big(u(\yb)-u(\xb)\big) \,\gamma (\xb,\yb )\,d\yb \qquad  \xb\in\mbRd,
\end{equation}
where  $\gamma(\xb,\yb)$ is a nonnegative symmetric kernel\footnote{For more general, sign-changing and nonsymmetric kernels we refer the reader to \cite{mengesha2013analysis} and \cite{DElia2017}, respectively.} such that, for $\xb\in\omg$
\begin{equation}\label{eq:kernel}
\left\{
\begin{array}{ll}
\gamma(\xb,\yb)  >  0 \quad &\forall\, \yb\in B_\delta(\xb)\\[2mm]
\gamma(\xb,\yb)  = 0 \quad &\forall\, \yb\in \mbRd \setminus B_\delta(\xb),
\end{array}\right.
\end{equation}
where $B_\delta(\xb) = \{\yb\in\mbRd: \; \|\xb-\yb\|<\delta,\; \xb\in \omg\}$ and $\delta$ is the interaction radius or horizon. For the Laplacian operator $\mcLn$, we define the interaction domain of $\omg$ associated with kernels like in \eqref{eq:kernel} as follows
\begin{equation}\label{eq:omgi}
\omgi = \{ \yb\in \mbRd\setminus\omg: \; \|\yb-\xb\|<\delta, \;\text{ {for some} }\xb\in\omg\},
\end{equation}
and set $\oomg =\omg\cup\omgi$. The domain $\omgi$ contains all points outside of $\omg$ that interact with points inside of $\omg$; as such, $\omgi$ is the volume where nonlocal boundary conditions, or volume constraints, must be prescribed to guarantee the well-posedness of the nonlocal equation associated with $\mcLn$ \cite{Du2013}. We refer to Figure \ref{fig:domains} (left) for an illustration of a two-dimensional domain, the support of $\gamma$ and the induced interaction domain. Here, the interaction domain is divided into the nonoverlapping partition $\omgi=\omg_{nloc}\cup \omg_{loc}$. In what follows we assume that nonlocal data is available on $\omg_{nloc}$ whereas only local information is available on the physical boundary of $\omg_{loc}$, i.e. on $\Gamma_{loc}=\partial\omg_{loc}\cap\partial\omg$.
\begin{figure}
\centering
\includegraphics[width=0.45\textwidth]{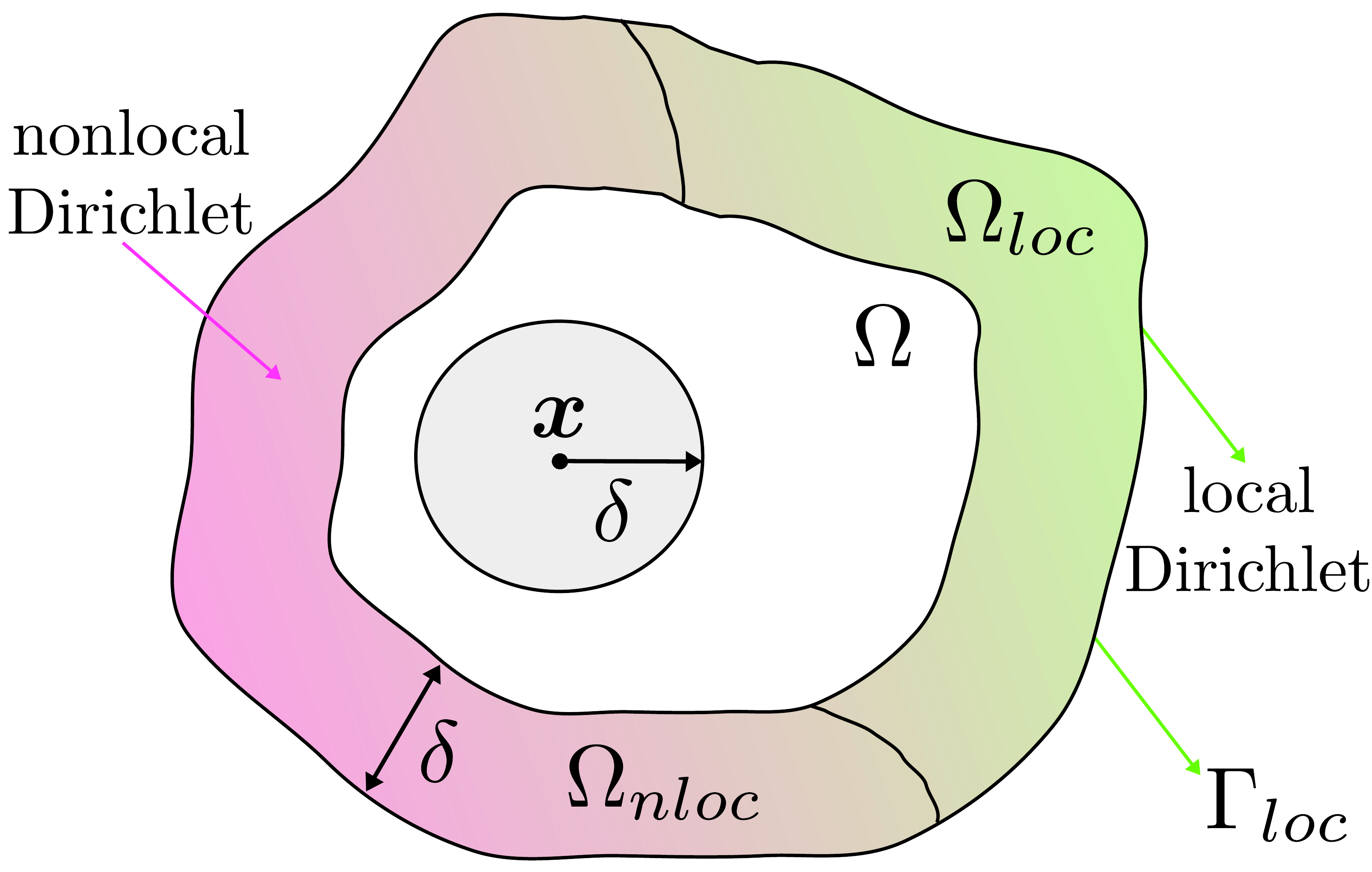}
\includegraphics[width=0.45\textwidth]{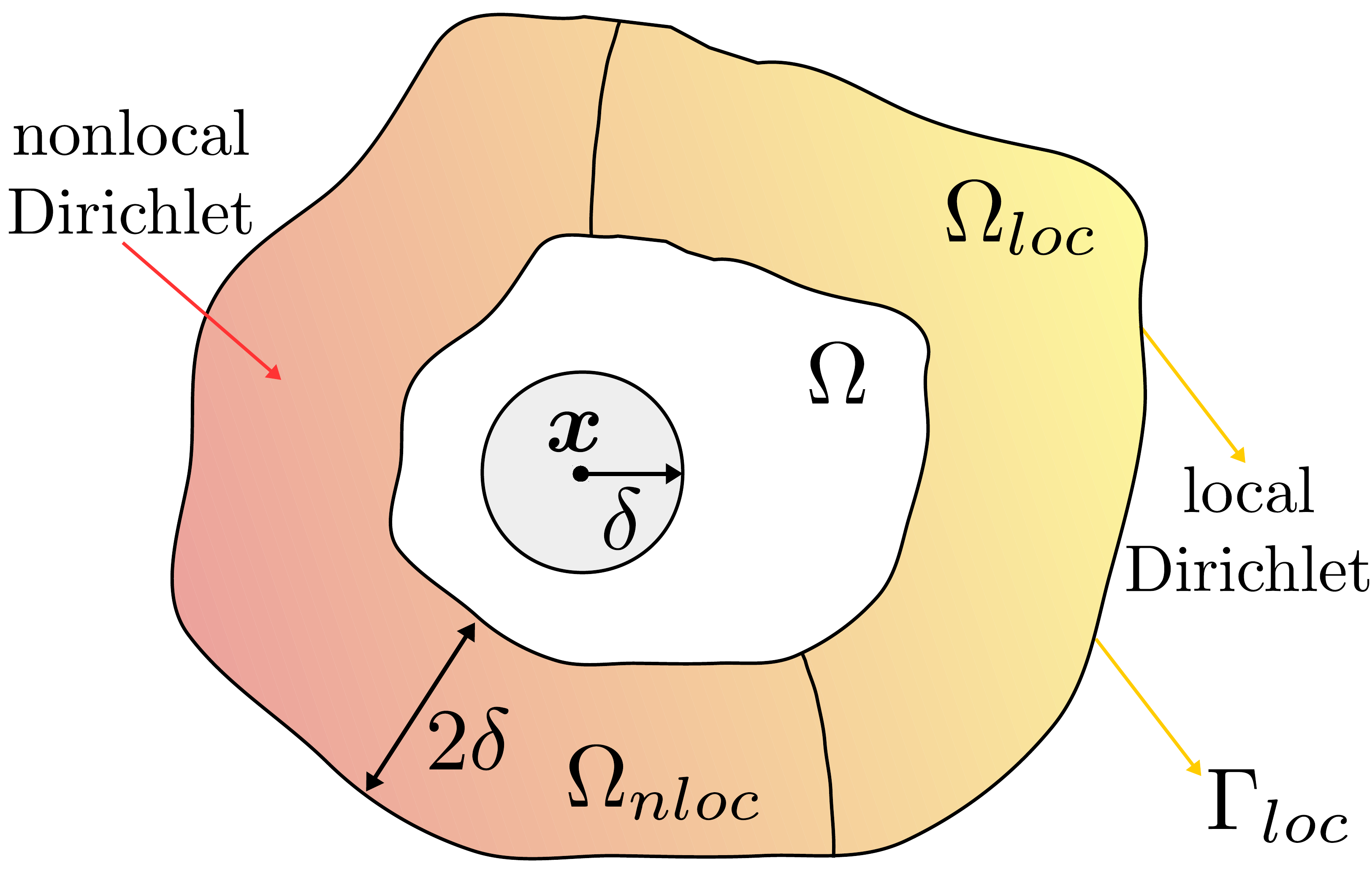}
\caption{The domain $\omg$, the support of $\gamma$ at a point $\xb\in\omg$, $B_\delta(\xb)$, and the induced interaction domain $\omgi$ for the nonlocal Poisson's problem (left) and the LPS model (right).}
\label{fig:domains}
\end{figure}

An important property of the Laplacian operator in \eqref{eq:L} is its $\delta$-convergence, i.e. as $\delta\to 0$ to the classical, local Laplacian $\Delta$. In fact, when the kernel $\gamma$ is properly scaled, we have the following pointwise relationship:
\begin{equation}\label{eq:nl-op-difference}
\mcL u(\xb) = \Delta u(\xb) + \mcO(\delta^2).  
\end{equation}

With the purpose of prescribing Neumann volume constraints, we introduce the nonlocal {\it flux} operator:
\begin{equation*}\label{eq:flux}
\mcN^{N\!D} u(\xb)=-\int_\oomg (u(\yb)-u(\xb))\gamma(\xb,\yb)\,d\yb \qquad \xb\in\omgi.
\end{equation*}
To provide an interpretation of the interaction operator, we note that the integral $\int_\omgi \mcN^{N\!D}(\nub)\,d\xb$ generalizes the concept of a local flux $\int_{\partial\omg}{\bf q}\cdot{\bf n}\,dA$ through the boundary of a domain, with $\mcN(\nub)$ being the nonlocal counterpart of the local flux density ${\bf q}\cdot{\bf n}$. We refer to \cite{Du2013} for additional details regarding the nonlocal vector calculus and results such as integration by parts and nonlocal Green's identities.

We introduce the nonlocal energy semi-norm, nonlocal energy space, and nonlocal volume-constrained energy space
\begin{equation}
\begin{array}{ll}
& |||v|||^2    := \displaystyle\int_{\oomg}\int_{{\oomg}}(u(\yb)-u(\xb))^2\gamma(\xb,\yb)\,d\yb \, d\xb \\ [5mm]
& V(\oomg)   := \left\{ v  \in L^2(\oomg) \,\,:\,\, |||v|||_{\oomg} < \infty \right\}\\[3mm]
& V_\Lambda(\oomg) := \left\{v\in V({\oomg}) \,\,:\,\, v=0\;{\rm on}\;\Lambda\subset\Omega_I\right\}.
\end{array}
\end{equation}
We also define the volume-trace space $\widetilde V_{\Lambda}(\oomg):=\{v|_\Lambda: \,v\in V(\oomg)\}$, for $\Lambda\subset\omgi$, and the dual spaces $V'(\oomg)$ and $V'_\Lambda(\oomg)$ with respect to $L^2$-duality pairings.

We consider kernels such that the corresponding energy norm satisfies a Poincar\'e-like inequality, i.e. $\|v\|_{0,{\oomg}}\leq C_{pn}|||v|||$ for all $v\in V_\Lambda(\oomg)$, where  $C_{pn}$ is the nonlocal Poincar\'e constant. For such kernels, the paper \cite{DuMengesha} shows that $C_{pn}$ is independent of $\delta$ if $\delta\in (0, \delta_0]$ for a given $\delta_0$. In this paper we consider a specific class of kernels, namely, integrable kernels such that there exist positive constants $\gamma_1$ and $\gamma_2$ for which $\gamma_1\leq \int_{\oomg} \gamma(\xb,\yb)\,d\yb$ and $\int_\oomg \gamma^2(\xb,\yb)\,d\yb\leq \gamma_2^2$ for all $\xb\in\omg$. In this setting $V(\oomg)$ and $V_\Lambda(\oomg)$ are equivalent to $L^2({\oomg})$ and $L^2_c(\oomg)$ and the operator $\mcL$ is such that $\mcL:L^2(\oomg)\to L^2(\oomg)$ \cite{Du2012}.

\medskip\noindent{\bf Strong form} We introduce the strong form of a nonlocal Poisson's problem with Dirichlet or mixed volume constraints. We refer, again, to the configuration in Figure \ref{fig:domains} (left) and recall that $\omgi=\Omega_{nloc}\cup\Omega_{loc}$ such that $\Omega_{nloc}\cap\Omega_{loc}=\emptyset$. For $s\in V'(\oomg)$, $v_n\in \widetilde V_{\omg_{nloc}}(\oomg)$, and $w_n\in \widetilde V_{\omg_{loc}}(\oomg)$ we define the {\it Dirichlet Poisson's problem} as: find $\un\in V(\oomg)$ such that 
\begin{equation}\label{eq:nonlocal-dirichlet}
\left\{\begin{array}{rlll}
-\ds\mcLn\un & = & s   & \xb\in\omg  \\[3mm]
\un     &    = & w_n & \xb\in\Omega_{loc} \\[3mm]
\un     &    = & v_n & \xb\in\Omega_{nloc},
\end{array}\right.
\end{equation} 
where \eqref{eq:nonlocal-dirichlet}$_2$ and \eqref{eq:nonlocal-dirichlet}$_3$ are two distinct Dirichlet volume constraints.
Similarly, given $s\in V'(\oomg)$, $v_n\in \widetilde V_{\omg_{nloc}}(\oomg)$, and $g_n\in V'(\omg_{loc})$, we define the {\it mixed Poisson's problem} as follows: find $\un\in V(\oomg)$ such that 
\begin{equation}\label{eq:nonlocal-mixed}
\left\{\begin{array}{rlll}
-\ds\mcLn\un & = & s   & \xb\in\omg  \\[3mm]
-\mcN \un     &    = & g_n & \xb\in\Omega_{loc} \\[3mm]
\un     &    = & v_n & \xb\in\Omega_{nloc},
\end{array}\right.
\end{equation} 
where \eqref{eq:nonlocal-mixed}$_2$ is the nonlocal counterpart of a flux condition, i.e. a Neumann boundary condition. As such, we refer to it as Neumann volume constraint.

\medskip\noindent{\bf Weak form}
With the purpose of analyzing the $\delta$-convergence of our strategies, we also introduce the weak form of problems \eqref{eq:nonlocal-dirichlet} and \eqref{eq:nonlocal-mixed}. By multiplying both equations by a test function and using nonlocal integration by parts \cite{Du2012}, we obtain the following weak formulations. 

For $s\in V'(\oomg)$, $v_n\in \widetilde V_{\omg_{nloc}}(\oomg)$, and $w_n\in \widetilde V_{\omg_{loc}}(\oomg)$ we define the {\it Dirichlet Poisson's problem} as: find $\un\in V_c(\oomg)$ such that $\un=w_n$ in $\omg_{loc}$, $\un=v_n$ in $\omg_{nloc}$ and, for all $z\in V(\oomg)$,
\begin{equation}\label{eq:weak-dirichlet}
\begin{aligned}
\ds\int_\oomg&\int_\oomg (\un(\xb)-\un(\yb))(z(\xb)-z(\yb))\gamma(\xb,\yb)\,d\yb\,d\xb =
\int_\omg sz\,d\xb,\\[3mm]
\end{aligned}
\end{equation}
or, equivalently, $a(u,z) = F(z)$, where the bilinear form is given by $a(u,z)=\langle u,z\rangle_{V_\omgi}$. It can be shown \cite{Du2012} that for every $\gamma(\cdot,\cdot)$ satisfying the Poincar\'e inequality $a(\cdot,\cdot)$ is coercive and continuous in $V_\omgi(\oomg)\times V_\omgi(\oomg)$ and that $F(\cdot)$ is continuous in $V_\omgi(\oomg)$. Thus, by the Lax-Milgram theorem problem \eqref{eq:weak-dirichlet} is well-posed.

Similarly, given $s\in V'(\oomg)$, $v_n\in \widetilde V_{\omg_{nloc}}(\oomg)$, and $g_n\in V'(\omg_{loc})$, one can define the {\it mixed Poisson's problem} as follows: find $\un\in V(\oomg)$ such that $\un=v_n$ in $\omg_{nloc}$ and for all $z\in V_{\omg_{nloc}}(\oomg)$,
\begin{equation}\label{eq:weak-mixed}
\begin{aligned}
\ds\int_\oomg&\int_\oomg (\un(\xb)-\un(\yb))(z(\xb)-z(\yb))\gamma(\xb,\yb)\,d\yb\,d\xb =
\int_{\omg_{loc}} g_n z\,d\xb +
\int_\omg sz\,d\xb,
\end{aligned}
\end{equation}
or, equivalently, $a(u,z)=F_{g_n}(z)$. Also in this case, it can be shown that $a(\cdot,\cdot)$ is coercive and continuous in $V_{\omg_{nloc}}(\oomg)$, provided the kernel induces a Poincar\'e inequality. Furthermore, the functional $F_{g_n}$ is continuous on $V_{\omg_{nloc}}(\oomg)$. Thus, by the Lax-Milgram theorem problem \eqref{eq:weak-mixed} is also well-posed.

\subsection{The linear peridynamic solid model}\label{sec:lps-model}
For the displacement function $\ub(\xb)\colon \mbRd \to \mbRd$, we define the linear peridynamic solid (LPS)  \cite{emmrich2007well} operator\footnote{Note that this model holds in the assumption of small displacements \cite{emmrich2007well}.} $\mcLp\colon \mbRd \to \mbRd$ as 
\begin{equation}\label{eq:LPS}
\begin{aligned}
\mcLp \ub(\xb) := & \dfrac{C_1}{m(\delta)} \int_\oomg \left(\lambda- \mu\right) \gamma(\left|\mathbf{y}-\mathbf{x}\right|) \left(\mathbf{y}-\mathbf{x} \right)\left(\theta(\mathbf{x}) + \theta(\mathbf{y}) \right) d\mathbf{y}\\
+ &
\frac{C_2}{m(\delta)}\int_\oomg \mu \gamma(\left|\mathbf{y}-\mathbf{x}\right|)\frac{\left(\mathbf{y}-\mathbf{x}\right)\otimes\left(\mathbf{y}-\mathbf{x}\right)}{\left|\mathbf{y}-\mathbf{x}\right|^2}  \left(\mathbf{u}(\mathbf{y}) - \mathbf{u}(\mathbf{x}) \right) d\mathbf{y},
\end{aligned}
\end{equation}
where the dilatation $\theta:\mbRd\to\mbR$ is defined as 
\begin{equation*}
    \theta(\mathbf{x}):=\dfrac{2}{m(\delta)}\int_\oomg \gamma(\left|\mathbf{y}-\mathbf{x}\right|) (\mathbf{y}-\mathbf{x})\cdot \left(\mathbf{u}(\mathbf{y}) - \mathbf{u}(\mathbf{x}) \right)d\mathbf{y}.
\end{equation*}
Here, for $d=2$, $C_1=2$ and $C_2=16$. The kernel function $\gamma$ is nonnegative and radial and satisfies the same assumptions as in \eqref{eq:kernel}. Furthermore, we consider kernels $\gamma$ such that $m$, defined as
$$
m(\delta):=\int_{B_\delta (\mathbf{x})}\gamma(\left|\mathbf{y}-\mathbf{x}\right|)\left|\mathbf{y}-\mathbf{x}\right|^2d\mathbf{y},
$$
is bounded. This guarantees well-posedness of the volume constrained problem associated with $\mcLp$ \cite{mengesha14Navier}.
The constants $\mu$, $\lambda$ are the shear and Lam\'e modulus, that, under the plane strain assumption \cite{bower2009applied}, are related to the Young's modulus $E$ and the Poisson ratio $\nu$ of a material, i.e. $\lambda=\frac{E\nu}{(1+\nu)(1-2\nu)}$, $\mu=\frac{E}{2(1+\nu)}$. It can be shown \cite{mengesha14Navier} that the LPS operator $\mcLp$ converges to the Navier operator below
\begin{equation}\label{eq:local-lps}
\mcL_l \mathbf{u}:=-\nabla\cdot(\lambda tr(\mathbf{E})\mathbf{I}+2\mu \mathbf{E})=-(\lambda-\mu)\nabla [\text{tr}(\mathbf{E})]-\mu \nabla\cdot(2\mathbf{E}+\text{tr}(\mathbf{E})\mathbf{I}),
\end{equation}
where $\mathbf{E}:=\dfrac{1}{2}(\nabla \mathbf{u}+(\nabla \mathbf{u})^T)$ and $\text{tr}(\mathbf{E})=\nabla\cdot\mathbf{u}$. In particular, we have the following pointwise relationship
\begin{equation}\label{eq:limit-lps}
    \mcLp \ub(\xb) = \mcL_l \ub(\xb) +\mcO(\delta^2).
\end{equation}

For the LPS operator $\mcLp$, we define the interaction domain of $\omg$ as
\begin{equation}\label{eq:omgi-2delta}
\omgi = \{ \yb\in \mbRd\setminus\omg: \; \|\yb-\xb\|<2\delta, \;\xb\in\omg\}.
\end{equation}
and set $\oomg =\omg\cup\omgi$. Note that in this case, $\omgi$ is a layer of thickness $2\delta$ surrounding $\omg$; this is due to the presence of a double integral in the definition of the operator. As before, $\omgi$ is the volume where nonlocal boundary conditions must be prescribed to guarantee the well-posedness of the nonlocal equation associated with $\mcLp$. We refer to Figure \ref{fig:domains} (right) for an illustration of a two-dimensional domain, the support of $\gamma$ and the induced interaction domain. The same division as in Section \ref{sec:poisson} into a nonoverlapping partition is performed. 

For the prescription of nonlocal flux conditions, we consider the following nonlocal flux operator for the LPS model. Let $\xb\in\Lambda\subset\omgi$, we have
\begin{equation}\label{eq:N-LPS}
\begin{aligned}
\mcN^{L\!P\!S} \ub(\xb) := & \dfrac{C_1}{m(\delta)} \int_\oomg \left(\lambda- \mu\right) \gamma(\left|\mathbf{y}-\mathbf{x}\right|) \left(\mathbf{y}-\mathbf{x} \right)\left(\theta(\mathbf{x}) + \theta(\mathbf{y}) \right) d\mathbf{y}\\
+&
\frac{C_2}{m(\delta)}\int_\oomg \mu \gamma(\left|\mathbf{y}-\mathbf{x}\right|)\frac{\left(\mathbf{y}-\mathbf{x}\right)\otimes\left(\mathbf{y}-\mathbf{x}\right)}{\left|\mathbf{y}-\mathbf{x}\right|^2}  \left(\mathbf{u}(\mathbf{y}) - \mathbf{u}(\mathbf{x}) \right) d\mathbf{y},
\end{aligned}
\end{equation}
where $\theta$ and $m$ are defined as above. For more details on nonlocal flux conditions for nonlocal mechanics problems we refer the interested reader to \cite{lehoucq2008force}.

As for the Laplacian operator, we introduce the energy norm and the corresponding spaces \cite{mengesha14Navier}.
\begin{equation}
\begin{aligned}
|||\ub|||^2_{L\!P\!S}=&  \frac{1}{m(\delta)}\int_\oomg \int_{\oomg\cap B_\delta (\mathbf{x})}  \dfrac{\gamma(\left|\mathbf{y}-\mathbf{x}\right|)}{\left|\mathbf{y}-\mathbf{x}\right|^2}\left[\left(\mathbf{u}(\mathbf{y}) - \mathbf{u}(\mathbf{x}) \right)\cdot\left(\mathbf{y}-\mathbf{x}\right)\right]^2 d\mathbf{y}\,d\mathbf{x}, \\[2mm]
V^{L\!P\!S}(\oomg):=&\left\{\ub\in [L^2(\oomg)]^d:|||\ub|||_{L\!P\!S}<\infty \right\} \\[2mm]
V^{L\!P\!S}_\Lambda(\oomg):=&\left\{\ub\in V^{L\!P\!S}(\oomg):\ub={\bf 0} \; {\rm on} \; \Lambda\subset\omgi\right\}
\end{aligned}
\end{equation}
Note that $|||\ub|||_{L\!P\!S}=0$ if and only if $\ub$ represents an infinitesimally rigid displacement, i.e.:
$$\ub(\xb)\in\{\mathbb{Q}\xb+\bb,\mathbb{Q}\in\mathbb{R}^{d\times d},\mathbb{Q}^T=-\mathbb{Q},\bb\in\mathbb{R}^d\}.$$
We also define the volume-trace space $\widetilde V^{L\!P\!S}_{\Lambda}(\oomg):=\{v|_\Lambda: \,v\in V^{L\!P\!S}(\oomg)\}$, for $\Lambda\subset\omgi$, and the dual spaces $(V^{L\!P\!S})'(\oomg)$ and $(V^{L\!P\!S})'_\Lambda(\oomg)$ with respect to $L^2$-duality pairings. Note that when $\gamma$ is an integrable function, similarly to the nonlocal Laplacian operator, the LPS operator acts as a map from $[L^2(\oomg)]^d$ to $[L^2(\oomg)]^d$.

\medskip\noindent{\bf Strong form} We introduce the strong form of the LPS problem with Dirichlet or mixed volume constraints. We refer, again, to the configuration in Figure \ref{fig:domains} (right). For $\sbb\in (V^{L\!P\!S})'(\oomg)$, $\vb_n\in \widetilde V^{L\!P\!S}_{\omg_{nloc}}(\oomg)$, and $\wb_n\in \widetilde V^{L\!P\!S}_{\omg_{loc}}(\oomg)$ we define the {\it Dirichlet LPS problem} as: find $\ub_n\in V^{L\!P\!S}(\oomg)$ such that 
\begin{equation}\label{eq:lps-dirichlet}
\left\{\begin{array}{rlll}
-\ds\mcLp\ub_n & = & \sbb   & \xb\in\omg  \\[3mm]
\ub_n     &    = & \wb_n & \xb\in\Omega_{loc} \\[3mm]
\ub_n     &    = & \vb_n & \xb\in\Omega_{nloc},
\end{array}\right.
\end{equation} 
where \eqref{eq:lps-dirichlet}$_2$ and \eqref{eq:lps-dirichlet}$_3$ are distinct Dirichlet volume constraints.
Similarly, given $\sbb\in (V^{L\!P\!S})'(\oomg)$, $\vb_n\in \widetilde V^{L\!P\!S}_{\omg_{nloc}}(\oomg)$, and ${\bf g}_n\in (V^{L\!P\!S})'(\omg_{loc})$, we define the {\it mixed LPS problem} as follows: find $\ub_n\in V^{L\!P\!S}(\oomg)$ such that 
\begin{equation}\label{eq:lps-mixed}
\left\{\begin{array}{rlll}
-\ds\mcLp\ub_n & = & \sbb   & \xb\in\omg  \\[3mm]
-\mcN^{L\!P\!S} \ub_n     &    = & {\bf g}_n & \xb\in\Omega_{loc} \\[3mm]
\ub_n     &    = & \vb_n & \xb\in\Omega_{nloc}.
\end{array}\right.
\end{equation}

\medskip\noindent{\bf Weak form} With the purpose of analyzing the $\delta$-convergence of our strategies, we also introduce the weak form of problems \eqref{eq:lps-dirichlet} and \eqref{eq:lps-mixed}. For clarity, and to avoid heavy notation, we present the formulations in the scalar setting.
We first introduce the following integration by parts result \cite{DElia2020Unified,Du2012}: for every $u\in V^{L\!P\!S}(\oomg)$ and $z\in V^{L\!P\!S}_{\omg_{nloc}}(\oomg)$, we have
\begin{equation}\label{eq:by-parts}
\begin{aligned}
\int_\omg -\mcL^{L\!P\!S}&u(\xb)z(\xb)\,d\xb\\
= & \frac{C_1 d\left(\lambda - \mu\right)}{(m(\delta))^2}\int_\oomg \left[\int_\oomg \gamma(\left|\mathbf{y}-\mathbf{x}\right|) (\mathbf{y}-\mathbf{x})\cdot \left(u(\mathbf{y}) - u(\mathbf{x}) \right)d\mathbf{y}\right]\times \\
&\hspace{2.4cm}\left[\int_\oomg \gamma(\left|\mathbf{y}-\mathbf{x}\right|) (\mathbf{y}-\mathbf{x})\cdot \left(z(\mathbf{y}) - z(\mathbf{x}) \right)d\mathbf{y}\right] d\mathbf{x}\\[1mm]
+ &  \frac{C_2\mu}{2m(\delta)}\int_\oomg\int_\oomg  \gamma(\left|\mathbf{y}-\mathbf{x}\right|)(u(\mathbf{y})-u(\mathbf{x}))(z(\yb)-z(\xb)) d\mathbf{y}d\mathbf{x}\\
+ & \int_\omgi \mcN^{L\!P\!S}u(\xb)z(\xb)\,d\xb\\
:= &a^{L\!P\!S}(u,z) + \int_{\omg_{loc}}
\mcN^{L\!P\!S}u(\xb)z(\xb)\,d\xb.
\end{aligned}
\end{equation}
It is important to note that the bilinear form $a(\cdot, \cdot)$ induces a norm in the space $V^{L\!P\!S}_\Lambda(\oomg)$, for all $\Lambda\subset\omgi$, or, in other words, $a(u,u)$ is equivalent to $|||u |||^2_{L\!P\!S}$ for all $u\in V^{L\!P\!S}_\Lambda(\oomg)$. Thus, $a(\cdot,\cdot)$ is continuous and coercive.

\smallskip 
By multiplying both equations \eqref{eq:lps-dirichlet} and \eqref{eq:lps-mixed} by a test function and using nonlocal integration by parts, we obtain the following weak formulations.
For $s\in (V^{L\!P\!S})'(\oomg)$, $v_n\in \widetilde V^{L\!P\!S}_{\omg_{nloc}}(\oomg)$, and $w_n\in \widetilde V^{L\!P\!S}_{\omg_{loc}}(\oomg)$,  $u_n\in V^{L\!P\!S}(\oomg)$ is a weak solution of the {\it Dirichlet LPS problem} if $u_n=w_n$ in $\Omega_{loc}$, $u_n=v_n$ in $\Omega_{nloc}$ and
\begin{equation}\label{eq:weak-dir-lps}
a^{L\!P\!S}(u,z) = \int_\omg sz\,d\xb, \qquad \forall \; z\in V^{L\!P\!S}_\omgi(\oomg).
\end{equation}
Similarly, given $s\in (V^{L\!P\!S})'(\oomg)$, $v_n\in \widetilde V^{L\!P\!S}_{\omg_{nloc}}(\oomg)$, and $g_n\in (V^{L\!P\!S})'(\omg_{loc})$,  $u_n\in V^{L\!P\!S}(\oomg)$ is a weak solution of the {\it mixed LPS problem} if $u_n=v_n$ in $\Omega_{nloc}$ and
\begin{equation}\label{eq:weak-mixed-lps}
a^{L\!P\!S}(u,z) = 
\int_{\omg_{loc}} g_n z\,d\xb
+\int_\omg sz\,d\xb, \qquad \forall \; z\in V^{L\!P\!S}_{\omg_{nloc}}(\oomg).
\end{equation}

The well-posedness of \eqref{eq:weak-dir-lps} and \eqref{eq:weak-mixed-lps} follows from the fact that $a^{L\!P\!S}(\cdot,\cdot)$ is continuous and coercive in $V^{L\!P\!S}(\oomg)$ and from the continuity of the right-hand sides. In fact, these properties allow us to apply the Lax-Milgram theorem that guarantees existence and uniqueness of solutions. 

\section{Proposed strategies}\label{sec:strategy}
In practice data may only available on the boundary $\partial\oomg$ and not in $\omgi$; in particular, values of the diffusive quantity, for the nonlocal Poisson's equation, and of the displacement, for the LPS model, may be available on parts of $\partial\oomg$, while nonlocal volume constraints may be available on the remaining part of $\omgi$. Thus, as indicated in Figure \ref{fig:domains}, we split the interaction domain in two parts: a ``nonlocal part'', $\omg_{nloc}$, where nonlocal volume constraints are available, and a ``local part'', $\omg_{loc}$, where only local, boundary data are available. As this is not enough for the well-posedness of the problem, we now introduce a strategy that, starting from this incomplete data set, delivers volume constraints on $\omg_{loc}$, hence allowing for the solution of the nonlocal problems. We present our strategies for the nonlocal Poisson equation, as the approach is \underline{identical} for the LPS model (the properties of the method are analyzed for both models).  

\medskip\noindent \textbf{Assumption 1} Only the following data are available:

\smallskip\noindent
{\bf 1.} $w_l\in H^\frac12(\Gamma_{loc})$: {\it local} Dirichlet boundary data on $\Gamma_{loc}=\partial\omg_{loc}\cap\partial\oomg$;\\
{\bf 2.} $v_n\in\widetilde V_{\omg_{nloc}}(\oomg)$: nonlocal Dirichlet data in $\Omega_{nloc}$;\\
{\bf 3.} $s\in V'(\oomg)$: forcing term over $\oomg$.

\smallskip
We design two strategies to automatically convert $w_l$ into a nonlocal volume constraint (either of Dirichlet or Neumann type) on $\omg_{loc}$. As we show in the following section, the most important property of our strategies is their {\it asymptotic compatibility}, i.e.
\begin{equation}\label{eq:AC}
\un \to u_l \;\; {\rm as} \;\; \delta\to 0 \quad {\rm in} \;\; V(\oomg) \;\; {\rm and} \;\; L^2(\oomg).
\end{equation}
Here, $u_n$ is the nonlocal solution corresponding to the proposed nonlocal volume constraints and $u_l$ is the solution of the following Poisson's equation
\begin{equation}\label{eq:local-Dirichlet}
\left\{\begin{array}{ll}
-\Delta \ul = s               & \xb\in\oomg \\[3mm]
\ul = w_l & \xb\in\Gamma_{loc} \\[3mm]
\ul = v_n                     & \xb\in\Gamma_{nloc},
\end{array}\right.
\end{equation}
i.e. the solution of the local problem with boundary data as in {\bf Assumption 1} on $\omg_{loc}$ and with boundary data $v_n|_{\Gamma_{nloc}}$, with $\Gamma_{nloc}=\partial\omg_{nloc}\cap\partial\oomg$. Note that, by prescribing the Dirichlet condition on $\Gamma_{nloc}$ we are assuming that $v_n|_{\Gamma_{nloc}}$ exists and is such that $v_n|_{\Gamma_{nloc}}\in H^\frac12(\Gamma_D)$. We emphasize that we are not we are not assuming $v_n\in H^1(\omg_{nloc})$, but only that $v_n$ has a well-defined trace on $\Gamma_{nloc}$.

\subsection{Dirichlet-to-Dirichlet strategy} \label{sec:Dirichlet-approach}
The first proposed strategy, referred to as Dirichlet-to-Dirichlet (DtD) strategy, consists in using the local solution $u_l$ of problem \eqref{eq:local-Dirichlet} as Dirichlet volume constraint for the nonlocal problem in $\Omega_{loc}$. We summarize the procedure below.

\medskip\noindent
{\bf 1} Solve the local problem \eqref{eq:local-Dirichlet} to obtain $u_l$. Note that $u_l\in\widetilde V(\omg_{loc})$.

\medskip\noindent
{\bf 2} Solve the (well-posed) nonlocal problem:
\begin{equation}\label{eq:nonlocal-Dirichlet}
\left\{\begin{array}{rlll}
-\ds\mcLn  u_n &=& s & \xb\in\omg \\[3mm]
 u_n          &=& u_l        & \xb\in\Omega_{loc} \\[3mm]
 u_n         &=& v_n        & \xb\in\Omega_{nloc}.
\end{array}\right.
\end{equation}

\subsection{Dirichlet-to-Neumann strategy} \label{sec:Neumann-approach}
The second strategy, referred to as Dirichlet-to-Neumann (DtN) strategy, consists in using the local solution $u_l$ of problem \eqref{eq:local-Dirichlet} to generate a Neumann volume constraint for the nonlocal problem in $\Omega_{loc}$. We summarize the procedure below.

\medskip\noindent
{\bf 1} Solve the local problem \eqref{eq:local-Dirichlet} to obtain $u_l$. Note that $\mcN^{N\!L} u_l$ for $\xb\in\omg_{loc}$ is well-defined and belongs to $V'(\omg_{loc})$.

\medskip\noindent
{\bf 2} Solve the (well-posed) nonlocal problem:
\begin{equation}\label{eq:nonlocal-Neumann}
\left\{\begin{array}{rlll}
-\ds\mcLn  u_n &=& s & \xb\in\omg \\[3mm]
-\mcN^{N\!L} u_n          &=& -\mcN^{N\!L} u_l        & \xb\in\Omega_{loc} \\[3mm]
 u_n         &=& v_n        & \xb\in\Omega_{nloc}.
\end{array}\right.
\end{equation}

\section{Convergence to the local limit} \label{sec:local-limit}
In this section we study the limiting behavior of the solution as the nonlocal interactions vanish, i.e. as $\delta\to 0$ and we show that \eqref{eq:AC} holds true with a second order convergence rate for both the Poisson's and LPS models. 

\smallskip For both the Dirichlet-to-Dirichlet strategy and Dirichlet-to-Neumann, the following propositions provide bounds for the errors 
\begin{equation}\label{eq:error}
\begin{aligned}
e_{E,N\!L} = |||\un-\ul|||, 
& \quad e_{E,L\!P\!S} = |||\ub_n-\ub_l|||_{L\!P\!S}, \\
e_{0,N\!L} = \|\un-\ul\|_{0,\oomg}, & \quad
e_{0,L\!P\!S} = \|\ub_n-\ub_l\|_{0,\oomg}.
\end{aligned}
\end{equation}

\smallskip
\begin{theorem}\label{thm:local-limit}
Let $\delta_0\in(0,\infty)$ and $\mcU_l:=\{\ul\in C^4(\oomg): \ul \hbox{ solves \eqref{eq:local-Dirichlet} for }\delta\in(0,\delta_0]\}$ be solutions to \eqref{eq:local-Dirichlet}. Then,
\begin{equation}\label{eq:bound-nl}
e_{E,N\!L} = \mcO(\delta^2).
\end{equation}
\end{theorem}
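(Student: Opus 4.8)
The plan is to compare the nonlocal solution $\un$ (from either the DtD or DtN strategy) with the local solution $\ul$ by subtracting the weak formulations and exploiting coercivity of the bilinear form $a(\cdot,\cdot)$ together with the $\mcO(\delta^2)$ pointwise consistency estimate \eqref{eq:nl-op-difference}. First I would observe that, since $\ul\in C^4(\oomg)$, the truncation estimate \eqref{eq:nl-op-difference} can be made uniform: there is a constant $C$, depending on $\|\ul\|_{C^4(\oomg)}$ (hence on $\delta_0$ via the family $\mcU_l$) but \emph{not} on $\delta$, such that $\|\mcLn\ul + \Delta\ul\|_{0,\oomg}\le C\delta^2$; equivalently $\|\mcLn\ul + s\|_{0,\omg}\le C\delta^2$ since $-\Delta\ul=s$ on $\omg$. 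This makes $\ul$ an approximate solution of the nonlocal equation in the interior, with residual of size $\mcO(\delta^2)$.

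Next I would set $e:=\un-\ul$ and derive the error equation. In the DtD case, $e$ vanishes on all of $\omgi=\omg_{loc}\cup\omg_{nloc}$ (by construction $\un=\ul$ on $\omg_{loc}$ and $\un=\ul=v_n$ on $\omg_{nloc}$), so $e\in V_\omgi(\oomg)$ is an admissible test function. Testing the weak form \eqref{eq:weak-dirichlet} satisfied by $\un$ against $z=e$, and using nonlocal integration by parts to write the analogous identity for $\ul$, one gets $a(e,e)=\int_\omg(s+\mcLn\ul)\,e\,d\xb$. Coercivity gives $C_{coer}|||e|||^2\le a(e,e)\le \|s+\mcLn\ul\|_{0,\omg}\,\|e\|_{0,\oomg}\le C\delta^2\,C_{pn}|||e|||$, where I use the Poincaré inequality (valid here since $e\in V_\omgi(\oomg)$, with $C_{pn}$ independent of $\delta$ for $\delta\le\delta_0$ by \cite{DuMengesha}). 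Dividing by $|||e|||$ yields $e_{E,N\!L}=|||e|||\le C'\delta^2$, which is \eqref{eq:bound-nl}. In the DtN case $e$ need not vanish on $\omg_{loc}$, but it lies in $V_{\omg_{nloc}}(\oomg)$, which is still the correct test space for the mixed weak form \eqref{eq:weak-mixed}; subtracting the weak equations and using that the Neumann data is exactly $-\mcN^{N\!L}\ul$, the boundary terms $\int_{\omg_{loc}}\mcN^{N\!L}\ul\,e$ cancel, leaving again $a(e,e)=\int_\omg(s+\mcLn\ul)\,e\,d\xb$, and the same coercivity-plus-Poincaré argument closes the estimate. (One should check that coercivity of $a(\cdot,\cdot)$ holds on $V_{\omg_{nloc}}(\oomg)$, which is exactly the Poincaré-type hypothesis recorded after \eqref{eq:weak-mixed}.)

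The main obstacle I anticipate is making the consistency constant in $\|\mcLn\ul+s\|_{0,\omg}\le C\delta^2$ genuinely $\delta$-independent and finite uniformly over the family $\mcU_l$: the pointwise Taylor-expansion bound \eqref{eq:nl-op-difference} has a constant proportional to $\|D^4\ul\|_{L^\infty}$ times a kernel-dependent moment $\int_{B_\delta}\|\xb-\yb\|^4\gamma\,d\yb$, and one must verify that, for the integrable kernels considered (scaled so that $\int\gamma\,d\yb$ and $\int\gamma^2\,d\yb$ are controlled), this fourth moment is $\mcO(\delta^2)$ relative to the scaling that produces the Laplacian in the limit. A secondary subtlety is that the residual $s+\mcLn\ul$ is only controlled on $\omg$, not on $\omgi$, so the test-function pairing must be restricted to $\omg$ — but that is automatic because the forcing in the nonlocal problems only appears as $\int_\omg sz\,d\xb$. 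For the LPS case the identical scheme applies verbatim with $a^{L\!P\!S}(\cdot,\cdot)$, the energy norm $|||\cdot|||_{L\!P\!S}$, the integration-by-parts identity \eqref{eq:by-parts}, and the pointwise limit \eqref{eq:limit-lps} in place of their Poisson analogues; the only extra care is that coercivity of $a^{L\!P\!S}$ requires the test space to exclude infinitesimal rigid motions, which is guaranteed since $\omg_{nloc}$ (hence $\Lambda$) has positive measure.
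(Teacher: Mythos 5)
Your argument is correct and is essentially the paper's own proof: the same $\mcO(\delta^2)$ consistency residual $s+\mcLn\ul$ (the paper packages this as an auxiliary nonlocal problem with right-hand side $\sl=-\mcLn\ul$), the same choice of test function $z=\un-\ul$, and the same coercivity-plus-Poincar\'e step followed by division by $|||\un-\ul|||$. The only difference is that you carry out the DtN and LPS cases explicitly, which the paper instead defers to \cite{DEliaNeumann2019} and to Theorem \ref{thm:local-limit-navier} by analogy.
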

\begin{proof}
We only prove \eqref{eq:bound-nl} for the DtD strategy and refer the reader to \cite{DEliaNeumann2019} for the DtN strategy as the steps of the proof are the same. In fact, for DtN, the only difference with the approach presented in that paper is step {\bf 1} (solution of a local problem), where, instead of solving a mixed boundary condition Poisson's problem, we solve a fully Dirichlet problem. 

By definition of $\un$ and $\ul$, we have
\begin{equation}\label{eq:nonlocal-local-Neumann}
\left\{\begin{array}{ll}
-\ds\mcL \un  = s    = -\Delta \ul   & \xb\in\omg \\[3mm]
\un = u_l & \xb\in\omg_{loc} \\[3mm]
\un           = v_n                  & \xb\in\omg_{nloc}.
\end{array}\right.
\end{equation}
We introduce a nonlocal auxiliary problem for the local solution $\ul$, keeping in mind that $v_n$ is compatible with the local solution.
\begin{equation}\label{eq:nonlocal-auxiliary}
\left\{\begin{array}{ll}
-\ds\mcL \ul  = \sl  = -\int_\oomg (\ul(\yb)-\ul(\xb))\gamma(\xb,\yb)\,d\yb & \xb\in\omg \\[3mm]
\ul = u_l & \xb\in\omg_{loc} \\[3mm]
\ul = v_n  & \xb\in\omg_{nloc}.
\end{array}\right.
\end{equation}
In order to estimate $e_{E,N\!L}$ we first consider the point-wise difference 
$s(\xb)\!-\!\sl(\xb)$. Property \eqref{eq:nl-op-difference} implies that
\begin{equation}\label{eq:s-sl}
|s(\xb)-\sl(\xb)| =    \left|\int_\oomg (\ul(\yb)-\ul(\xb))\gamma(\xb,\yb)\,d\yb - \Delta\ul \right|
                  = \mcO(\delta^2).
\end{equation}
Next, we consider the weak forms of \eqref{eq:nonlocal-local-Neumann} and \eqref{eq:nonlocal-auxiliary} and use, in both of them, the test function $z\in V_\omgi(\oomg)$; we have
\begin{equation}\label{eq:weak-wun}
\int_\oomg\int_\oomg (\un(\xb)-\un(\yb))(z(\xb)-z(\yb))\gamma(\xb,\yb)\,d\yb\,d\xb = \int_\omg s\,z\,d\xb,
\end{equation}
\begin{equation}\label{eq:weak-ul}
\int_\oomg\int_\oomg (\ul(\xb)-\ul(\yb))(z(\xb)-z(\yb))\gamma(\xb,\yb)\,d\yb\,d\xb = \int_\omg \sl\,z\,d\xb.
\end{equation}
Subtraction gives
\begin{displaymath}
\begin{aligned}
\int_\oomg\int_\oomg(\un(\xb)-\ul(\xb)-\un(\yb)+\ul(\yb))(z(\xb)-z(\yb))
\gamma(\xb,\yb)\,d\yb\,d\xb = \int_\omg (s-\sl)\,z\,d\xb.
\end{aligned}
\end{displaymath}
To prove the error estimate we then choose $z=\un-\ul\in V_\omgi(\oomg)$. We have
\begin{displaymath}
|||\un-\ul |||^2 \leq \int_\omg (s-\sl)\,(\un-\ul)\,d\xb
\leq \|s-\sl\|_{0,\omg} \|\wun-\ul\|_{0,\omg} \leq \mcO(\delta^2) C_{pn} |||\un-\ul|||.
\end{displaymath}
By dividing both sides by $|||\un-\ul |||$, the error bound follows.
\end{proof}

Before addressing the error bound for the LPS model, we introduce the local problem corresponding to the operator $\mcL_l$ introduced in \eqref{eq:local-lps}, i.e.
\begin{equation}\label{eq:navier}
\left\{\begin{array}{rlll}
-\mcL_l \ub_l &=& \sbb & \quad\xb\in\oomg\\[2mm]
\ub_l &=& \wb_l & \quad\xb \in \Gamma_{loc}\\[2mm]
\ub_l &=& \vb_n & \quad\xb\in\Gamma_{nloc},
\end{array}\right.
\end{equation}
where, $\wb_l$ is the available local Dirichlet data on $\Gamma_{loc}$, and $\vb_n$ is the available nonlocal Dirichlet volume constraint on $\Gamma_{nloc}=\partial\omg_{nloc}\cap\partial\oomg$. As for the local Poisson's equation, we assume that the nonlocal Dirichlet data $\vb_n$ has a well-defined trace on $\Gamma_{nloc}$ and is compatible with the local solution. We can now state the the following theorem, whose proof, based on \eqref{eq:limit-lps}, follows exactly the same steps used in Theorem \ref{thm:local-limit} and is, hence, omitted.

\medskip
\begin{theorem}\label{thm:local-limit-navier}
Let $\delta_0\in(0,\infty)$ and $\mcU^{L\!P\!S}_l:=\{\ub_l\in C^4(\oomg): \ub_l \hbox{ solves \eqref{eq:navier} for }\delta\in(0,\delta_0]\}$ be solutions to \eqref{eq:navier}. Then,
\begin{equation}
e_{E,L\!P\!S} = \mcO(\delta^2).
\end{equation}
\end{theorem}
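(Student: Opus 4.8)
The plan is to mimic the proof of Theorem \ref{thm:local-limit} line by line, replacing the scalar nonlocal Laplacian $\mcLn$ by the LPS operator $\mcLp$, the classical Laplacian $\Delta$ by the Navier operator $\mcL_l$, and the scalar energy seminorm $|||\cdot|||$ by the vector-valued $|||\cdot|||_{L\!P\!S}$. Concretely, I would first record that, by construction of the DtD (resp.\ DtN) strategy, the nonlocal displacement $\ub_n$ solves a problem of the form $-\mcLp\ub_n = \sbb = -\mcL_l\ub_l$ in $\omg$, with $\ub_n=\ub_l$ on $\omg_{loc}$ and $\ub_n=\vb_n$ on $\omg_{nloc}$, the forcing identity being exactly the definition of $\sbb$ in \eqref{eq:navier}. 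Then I would introduce the nonlocal auxiliary problem for the local solution $\ub_l$, namely $-\mcLp\ub_l = \sbb_l$ in $\omg$ with the same volume constraints, where $\sbb_l := -\mcLp\ub_l$ is simply the LPS operator applied to the (sufficiently smooth) local solution.

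Next I would estimate the pointwise forcing mismatch. By the pointwise consistency estimate \eqref{eq:limit-lps}, for $\ub_l\in C^4(\oomg)$ one has $\sbb(\xb)-\sbb_l(\xb) = -\mcL_l\ub_l(\xb) + \mcLp\ub_l(\xb) = \mcO(\delta^2)$ uniformly in $\xb\in\omg$, so that $\|\sbb-\sbb_l\|_{0,\omg} = \mcO(\delta^2)$ (the constant depending on $\delta_0$ and on $\|\ub_l\|_{C^4(\oomg)}$, which is uniformly bounded over $\mcU^{L\!P\!S}_l$ by hypothesis). Then I would subtract the weak formulations \eqref{eq:weak-dir-lps} for $\ub_n$ and for $\ub_l$ — both tested against the same $\zb\in V^{L\!P\!S}_\omgi(\oomg)$ — to obtain
\begin{equation*}
a^{L\!P\!S}(\ub_n-\ub_l,\zb) = \int_\omg (\sbb-\sbb_l)\cdot\zb\,d\xb,
\end{equation*}
which is legitimate because $\ub_n-\ub_l$ vanishes on all of $\omgi$ and hence lies in $V^{L\!P\!S}_\omgi(\oomg)$.

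Finally I would choose the test function $\zb = \ub_n-\ub_l$. Using coercivity of $a^{L\!P\!S}(\cdot,\cdot)$ on $V^{L\!P\!S}_\omgi(\oomg)$ — equivalently, $a^{L\!P\!S}(\ub,\ub)$ is equivalent to $|||\ub|||_{L\!P\!S}^2$ on that space, as recorded after \eqref{eq:by-parts} — together with Cauchy--Schwarz and the LPS Poincaré inequality $\|\vb\|_{0,\oomg}\le C_{pn}|||\vb|||_{L\!P\!S}$ valid (uniformly in $\delta\le\delta_0$) on $V^{L\!P\!S}_\omgi(\oomg)$, I would write
\begin{equation*}
|||\ub_n-\ub_l|||_{L\!P\!S}^2 \lesssim a^{L\!P\!S}(\ub_n-\ub_l,\ub_n-\ub_l) = \int_\omg (\sbb-\sbb_l)\cdot(\ub_n-\ub_l)\,d\xb \le \|\sbb-\sbb_l\|_{0,\omg}\,\|\ub_n-\ub_l\|_{0,\omg} \le \mcO(\delta^2)\,C_{pn}\,|||\ub_n-\ub_l|||_{L\!P\!S},
\end{equation*}
and dividing through by $|||\ub_n-\ub_l|||_{L\!P\!S}$ yields $e_{E,L\!P\!S}=\mcO(\delta^2)$.

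The only genuinely new technical point compared with the Poisson case — and the step I would treat most carefully — is the $\delta$-uniformity of the constants: one must know that the coercivity constant of $a^{L\!P\!S}$ and the LPS Poincaré constant on $V^{L\!P\!S}_\omgi(\oomg)$ can be taken independent of $\delta$ for $\delta\in(0,\delta_0]$ (the analogue of the result of \cite{DuMengesha} cited for the scalar kernel), and that the $\mcO(\delta^2)$ term in \eqref{eq:limit-lps} is controlled uniformly by $\|\ub_l\|_{C^4(\oomg)}$; these are exactly the kinds of estimates established in \cite{mengesha14Navier}. Given those, the argument is a verbatim transcription of Theorem \ref{thm:local-limit}, which is presumably why the authors omit it. A secondary bookkeeping point is to verify that $\mcLp\ub_l$ is indeed a valid right-hand side, i.e.\ lies in $(V^{L\!P\!S})'(\oomg)$, and that the DtN variant only changes Step \textbf{1} (solving a local Dirichlet rather than mixed problem) exactly as in the scalar case.
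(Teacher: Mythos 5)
Your proposal is correct and matches the paper's intent exactly: the authors omit the proof, stating only that it ``follows exactly the same steps used in Theorem \ref{thm:local-limit}'' with the pointwise consistency estimate \eqref{eq:limit-lps} in place of \eqref{eq:nl-op-difference}, which is precisely the line-by-line transcription you carry out (auxiliary nonlocal problem for $\ub_l$, $\mcO(\delta^2)$ forcing mismatch, subtraction of weak forms, test function $\ub_n-\ub_l$, coercivity and Poincar\'e). Your added attention to the $\delta$-uniformity of the coercivity and Poincar\'e constants from \cite{mengesha14Navier} is a reasonable extra precaution rather than a deviation.
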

 
\begin{remark}\label{rem:L2}
An immediate consequence of Theorem \ref{thm:local-limit} implies that the convergence rate of $e_0$ is at least quadratic. This result can be obtained by applying the Poincar\'e inequality, i.e.
\begin{equation*}
    e_{0,N\!L} = \|\un-\ul\|_{0,\oomg}\leq C_{pn} |||\un-\ul ||| = C_{pn} e_{E,N\!L} = \mcO(\delta^2).
\end{equation*}
Following the same arguments, we can also show that the same bound holds for the LPS model. In fact, paper \cite{mengesha14Navier} provides a Poincar\'e-type inequality associated with the LPS operator $\mcL^{L\!P\!S}$ with constant $C^{L\!P\!S}_{pn}$. Thus, as a consequence of Theorem \ref{thm:local-limit-navier}, we have
\begin{equation*}
    e_{0,L\!P\!S} = \|\ub_n-\ub_l\|_{0,\oomg}\leq C^{L\!P\!S}_{pn} |||\ub_n-\ub_l ||| = C^{L\!P\!S}_{pn} e_{E,L\!P\!S} = \mcO(\delta^2).
\end{equation*}
\end{remark}

\section{Numerical tests}\label{sec:tests}
We report the results of several two-dimensional numerical tests that illustrate our theoretical results and highlight the efficacy of the proposed methods.  

In all tests we utilize {a particle discretizations of the strong form of the nonlocal Poisson's problem and the LPS model introduced in Section \ref{sec:poisson}  and \ref{sec:lps-model} respectively. The meshfree discretization method we use is based on an optimization-based quadrature rule developed and analyzed in \cite{trask2019asymptotically,you2020asymptotically,You_2019,yu2021asymptotically}. In this approach, we discretize the union of the domain and interaction domain, $\oomg$, by a collection of points 
$$\chi_{h} = \{\xb_i\}_{\{i=1,2,\cdots,M\}} \subset \oomg,$$
then solve for the solution $u_{(i)}\approx u_n(\xb_i)$ at $\xb_i\in \chi_h$ using a one point quadrature rule. Although the method can be applied to more general grids, in all numerical tests below we require $\chi_h$ to be a uniform Cartesian grid:
$$\chi_h:=\{(k_{(1)}h,\cdots,k_{(d)}h)|\bm{k}=(k_{(i)},\cdots,k_{(d)})\in\mathbb{Z}^d\}\cap \oomg.$$
Here $h$ is the spatial grid size. To maintain an easily scalable implementation, in our $\delta$-convergence studies \cite{bobaru2009convergence} we assume $h$ to be chosen such that the ratio $\frac{\delta}{h}$ is bounded by a constant as $\delta \rightarrow 0$. This meshfree discretization method based on optimization-based quadrature rules features simplicity in implementation and is asymptotically compatible, i.e., it is such that the nonlocal solution converges to its local counterpart as $\delta,h\rightarrow 0$. For further implementation details, we refer interested reader to \cite{fan2021asymptotically,yu2021asymptotically}.}



\begin{figure}
\centering
\includegraphics[width=0.7\textwidth]{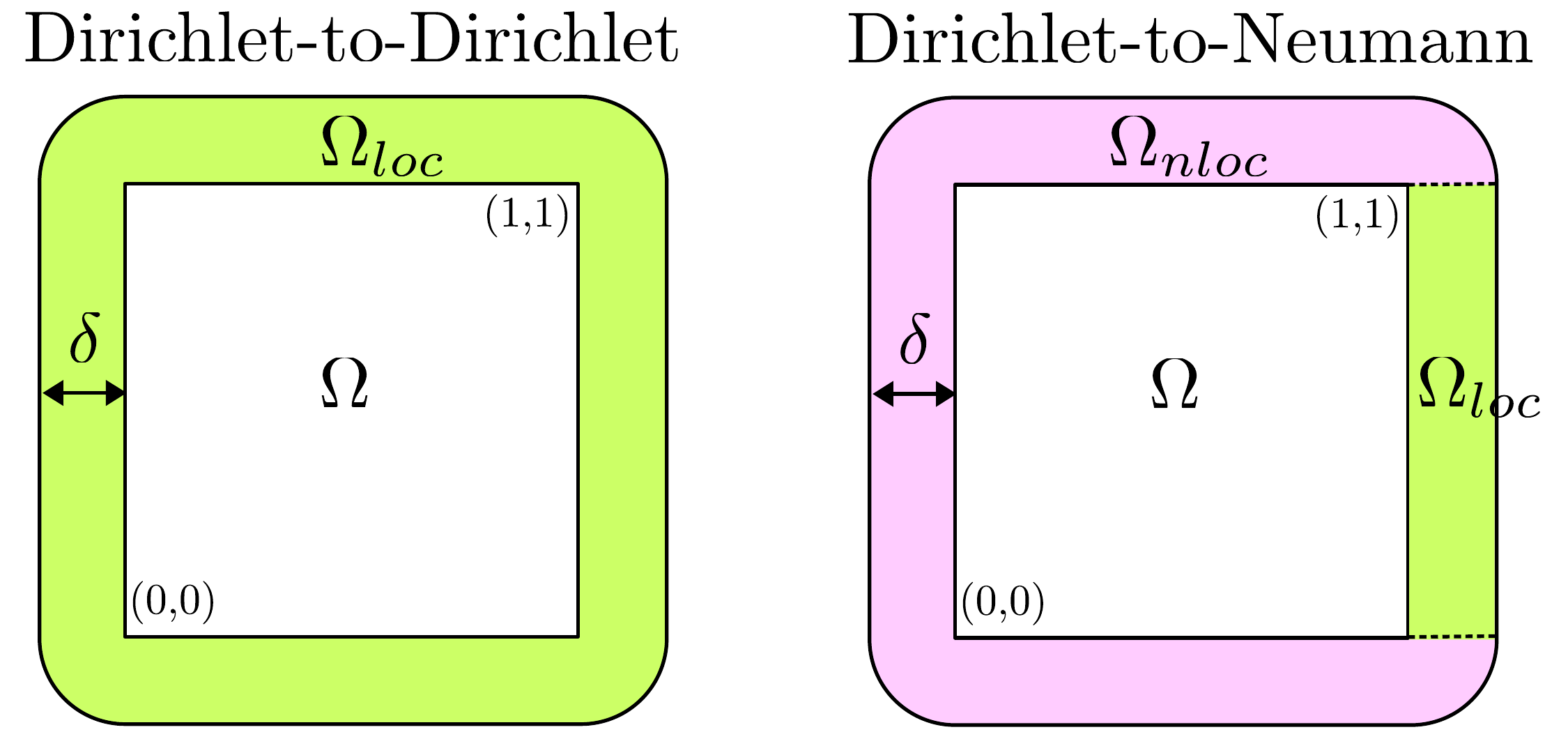}
\caption{Two dimensional configuration utilized in the nonlocal Poisson's consistency and convergence tests for the DtD strategy (left) and DtN strategy (rught).}
\label{fig:ND-tests}
\end{figure}

\subsection{Consistency tests for the nonlocal Poisson's equation}
Theorem \ref{thm:local-limit} implies that when the data are smooth enough to have $\mcL^{N\!L}\ul = \Delta u_l$, then $\un = u_l$. We use this observation to conduct a consistency test for the proposed method. Indeed, we consider local solutions $\ul$ such that $\mcL\ul=\Delta\ul$ and expect to observe that the local and nonlocal solutions coincide (up to discretization error).

We refer to the two-dimensional configuration reported in Figure \ref{fig:ND-tests}. Here, $\Omega=(0,1)^2$ and $\Omega_I$ is a layer of thickness $\delta$ surrounding the domain. We use two different configurations for the DtD and DtN strategy. For the former we refer to the configuration on the left of Figure \ref{fig:ND-tests} where $\Omega_I=\Omega_{loc}$; whereas for the latter we refer to the configuration on the right where $\Omega_{loc}$ only covers the right side of the interaction domain, i.e. $\omg_{loc}=[1,1+\delta]\times[0,1]$. In all our consistency tests we use the constant kernel
\begin{equation}\label{eq:const-kernel}
\gamma(\xb,\yb)=\dfrac{4}{\pi \delta^4}\mcX_{B_\delta(\xb)}(\yb)
\end{equation}
and the following set of solutions
\begin{itemize}
    \item $f(\xb)=0$, $u_l(\xb)=\xb_1 + \xb_2$ on $\partial\Omega$, $u_n(\xb)=\xb_1 + \xb_2$ on $\Omega_{nloc}$. Note that this solution corresponds to $u_l=\xb_1+\xb_1$.
    \item $f(\xb)=-6(\xb_1 + \xb_2)$, $u_l(\xb)=\xb_1^3 + \xb_2^3$ on $\partial\Omega$, $u_n(\xb)=\xb_1^3 + \xb_2^3$ on $\Omega_{nloc}$. Note that this solution corresponds to $u_l=\xb_1^3+\xb_2^3$.
\end{itemize}
Consistently with our theory, in both cases and for both strategies (i.e. DtD and DtN) the nonlocal solution coincides with the local solution up to {\it machine precision}. In fact, we observe $e_0\approx\mcO(10^{-17})$. Note that this is possible because our mesh free discretization method can reproduce exactly both linear and cubic polynomials.  

\subsection{Convergence tests for the nonlocal Poisson's equation}
We test the convergence of $\un$ to the local solution $u_l$ as $\delta\to 0$. For the same constant kernel defined in \eqref{eq:const-kernel} and for the same configurations illustrated in Figure \ref{fig:ND-tests}, we consider the following set of solutions
\begin{itemize}
    \item $f(\xb)=-2\sin(\xb_1)\cos(\xb_2)$, $\ul(\xb)=\sin(\xb_1)\cos(\xb_2)$ on $\partial\Omega$,\\ $\un(\xb)=\sin(\xb_1)\cos(\xb_2)$ for $\xb\in\Omega_{nloc}$; the corresponding local solution is $\ul(\xb)=\sin(\xb_1)\cos(\xb_2)$.
    \item $f(\xb)=-12(\xb_1^2+\xb_2^2)$, $u_l(\xb)=\xb_1^4+\xb_2^4$ for $\xb\in\partial\Omega$ and $\un(\xb)=\xb_1^4+\xb_2^4$ for $\xb\in\Omega_{nloc}$; the corresponding local solution is given by $\ul=\xb_1^4+\xb_2^4$.
\end{itemize}
Convergence results are reported in Table \ref{tab:ND-DD} for the DtD strategy and in Table \ref{tab:ND-DN} for the DtN strategy. Here, we report, for decreasing values of $\delta$, the $L^2$ norm of the difference between local and nonlocal solution, i.e. $e_0$ and the corresponding rate of convergence. We recall that in our discretization scheme $\delta$ and the node spacing $h$ are related, i.e. their ratio is constant and it is set to 2.5 for the sinusoidal solution and to 3.1 for the polynomial one. In both cases, the smallest $h$ is set to 0.1 and then halved at every run. The observed {\it quadratic} rates are in alignment with our theory, see Remark \ref{rem:L2}. We point out that the faster converge of the DtD strategy is due to the fact that the nonlocal solution is closer (by construction) to the local one. In fact, they coincide on the interaction domain. 
        
\begin{table}[]
    \centering
    \begin{tabular}{|c|c|c|c|c|c|}
    \hline
    \multicolumn{3}{|c|}{sinusoidal} & \multicolumn{3}{c|}{polynomial}\\
    \hline
    $\delta$ & $e_0$ &  rate 
    & $\delta$ & $e_0$ &  rate \\
    \hline
    0.25 &  1.837e-4&  -- & 0.31 & 9.571e-3 & -- \\
    0.125 & 4.443e-5 & 2.0473  & 0.155 & 2.198e-3 & 2.1226  \\
    0.0625 & 1.098e-5  & 2.0174  & 0.0775 & 5.290e-3 & 2.0547 \\
    0.03125 & 2.730e-6 & 2.0071 & 0.0388 & 1.299e-4 & 2.0255 \\
    \hline
    \end{tabular}
    \caption{For the nonlocal Poisson's equation, $L^2$-norm errors and convergence rates for the DtD strategy.}
    \label{tab:ND-DD}
\end{table}

\begin{table}[]
    \centering
    \begin{tabular}{|c|c|c|c|c|c|}
    \hline
    \multicolumn{3}{|c|}{sinusoidal} & \multicolumn{3}{c|}{polynomial}\\
    \hline
    $\delta$ & $e_0$ &  rate 
    & $\delta$ & $e_0$ &  rate \\
    \hline
    0.25 &  2.551e-4 &  -- & 0.31 & 1.094e-2 & -- \\
    0.125 & 7.257e-5  & 1.8136  & 0.155 & 2.929e-3 & 1.9014  \\
    0.0625 & 1.953e-5   & 1.9455  & 0.0775 & 7.720e-4 & 1.9239 \\
    0.03125 & 5.069e-6 & 1.8941 & 0.0388 & 1.990e-4 & 1.9561 \\
    \hline
    \end{tabular}           
    \caption{For the nonlocal Poisson's equation, $L^2$-norm errors and convergence rates for the DtN strategy.}
    \label{tab:ND-DN}
\end{table}

\subsection{Numerical tests for the LPS model}
We consider the LPS model introduced in Section \ref{sec:lps-model} and we test consistency and convergence with respect to $\delta$ of both strategies. In all our tests we {consider the deformation of a hollow cylinder as illustrated in Figure \ref{fig:LPS-settings}, and} refer the two-dimensional configurations reported in Figure \ref{fig:LPS-tests} for details on the domain parameters. Specifically, we set $\omg=B_{1.5}({\bf 0})\setminus B_1({\bf 0})$. The interaction domain is then defined as a layer of thickness $2\delta$ surrounding the disc, both inside and outside. For the DtD strategy we use the configuration on the left where $\omg_{loc}=\omgi$, i.e. we assume that only local boundary conditions are available. For the DtN strategy we consider the configuration on the right where $\omg_{loc}$ only corresponds to the inner portion of the interaction domain, i.e. $\omg_{loc}=B_1(\zerob)\setminus B_{1-2\delta}(\zerob)$. 

\begin{figure}
\centering
\includegraphics[width=0.9\textwidth]{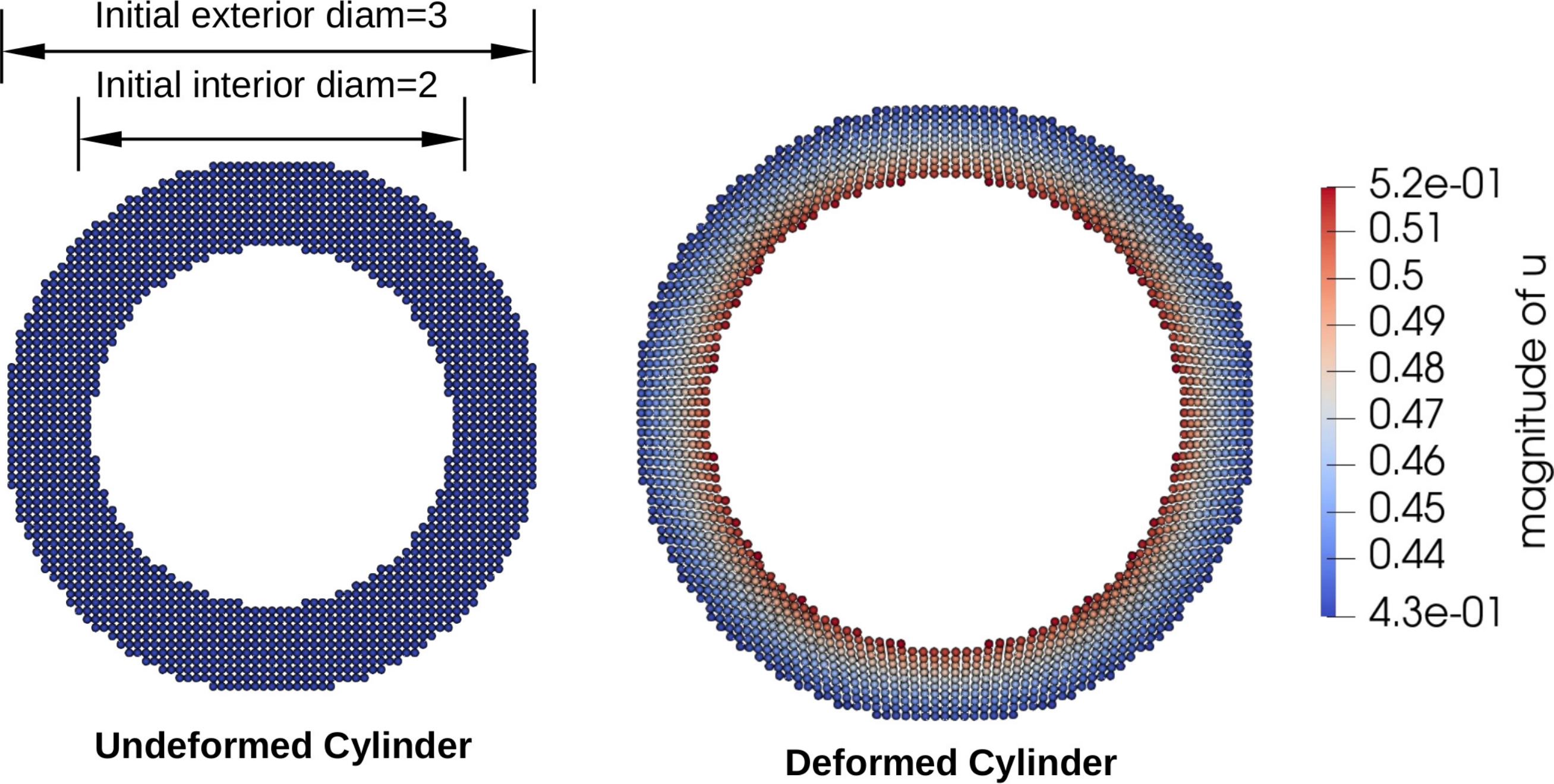}
\caption{Two dimensional hollow cylinder problem settings.}
\label{fig:LPS-settings}
\end{figure}

\begin{figure}
\centering
\includegraphics[width=0.75\textwidth]{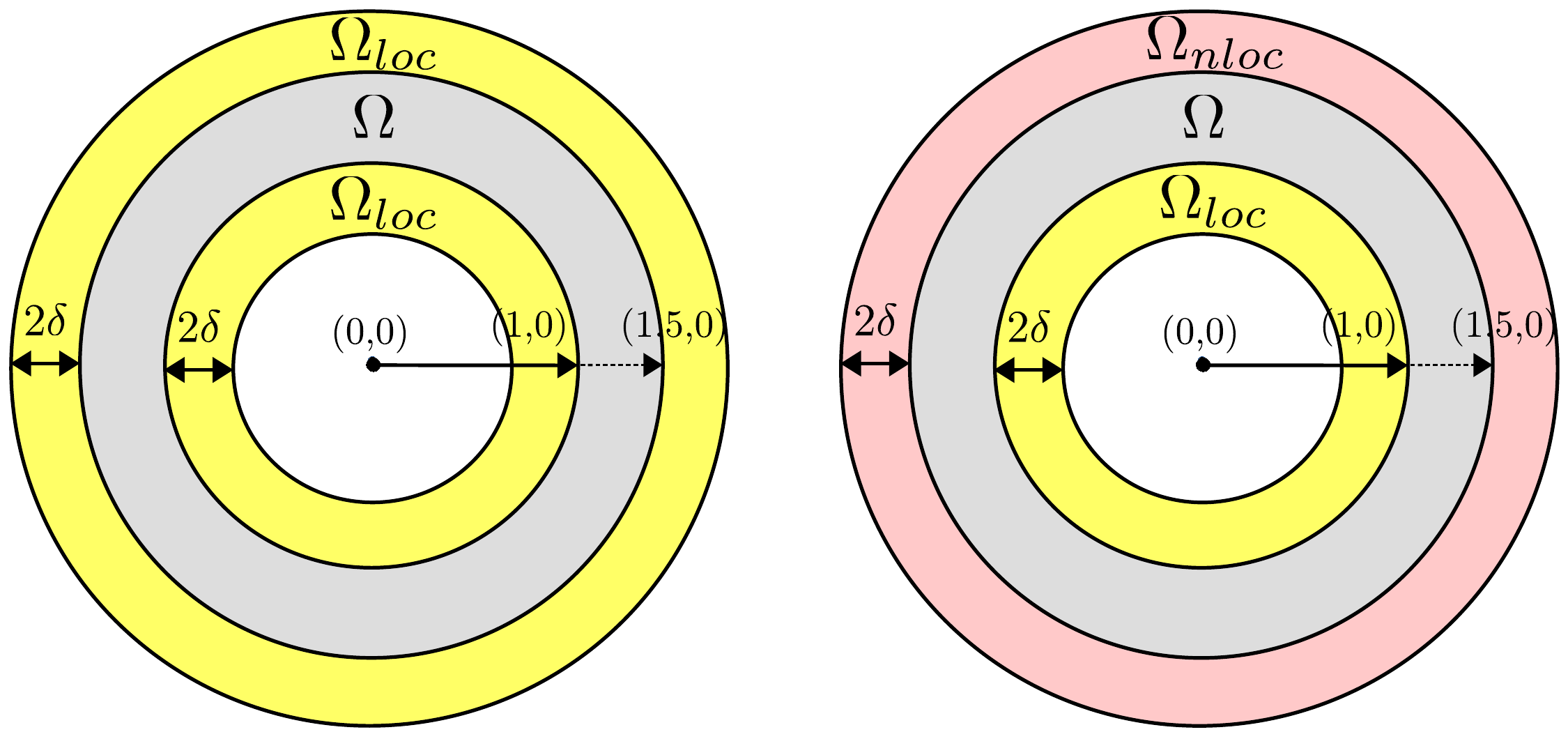}
\caption{Two dimensional configuration utilized in the LPS consistency and convergence tests for the DtD strategy (left) and DtN strategy (right).}
\label{fig:LPS-tests}
\end{figure}

To test the consistency of both procedures, we consider the linear function $\ub_l= {[10\xb_1+2\xb_2,3\xb_1+4\xb_2]}$. This function is such that $\mcL^{L\!P\!S}\ub_l = \mcL_l\ub_l$, where $\mcL^{L\!P\!S}$ and $\mcL_l$ are defined as in \eqref{eq:LPS} and \eqref{eq:local-lps}, respectively. Thus, as for the nonlocal Poisson's model, we expect the nonlocal solution obtained with both the DtD and DtN procedures to be such that $\ub_n=\ub_l$. Our results indicate, once again, that the two solutions are identical, up to {\it machine precision}, i.e. $e_0=\mcO(10^{-17})$. 

\smallskip
To test the convergence with respect to $\delta$ we consider an analytic solution of the local Navier equation \eqref{eq:navier}. {Under a plane strain assumption and subject to an internal pressure $p_0=0.1$, the classical, local displacement solution for the hollow cylinder is given by
$$\ub_l= {\left[A\xb_1+\dfrac{B\xb_1}{\xb_1^2+\xb_2^2},A\xb_2+\dfrac{B\xb_2}{\xb_1^2+\xb_2^2}\right]}$$
where 
$$A=\dfrac{(1+\nu)(1-2\nu)p_0R_0^2}{K(R_1^2-R_0^2)},\;B=\dfrac{(1+\nu)p_0R_0^2R_1^2}{K(R_1^2-R_0^2)}.$$
$R_0=1$ and $R_1=1.5$ are the interior and exterior radius of the (undeformed) hollow cylinder}. We report the results of our tests in Table \ref{tab:lps-DtD} for the DtD strategy and in Table \ref{tab:lps-DtN} for the DtN strategy. In both cases, we consider two values of Poisson's ratio $\nu=0.3$ and 0.49 respectively. Also in this case, the ratio between $\delta$ and $h$ is fixed and set to 3.2; the coarser computational domain is such that $h=0.0937$. The node spacing is then halved at each run of the convergence test. The $L^2$-norm errors show a {\it quadratic} convergence rate, confirming our theoretical predictions in Remark \ref{rem:L2}.

\begin{table}[]
    \centering
    \begin{tabular}{|c|c|c|c|c|c|}
    \hline
    \multicolumn{3}{|c|}{$\nu=0.3$} & \multicolumn{3}{c|}{$\nu=0.49$}\\
    \hline   
    $\delta$ & $e_0$ &  rate 
    & $\delta$ & $e_0$ &  rate \\
    \hline
    0.3 & 4.547e-6 &  -- & 0.3 & 3.253e-5 & -- \\
    0.15 & 7.698e-7  & 2.5625  & 0.15 & 4.836e-6 & 2.7498  \\
    0.075 & 1.714e-7   & 2.1673  & 0.075 & 1.002e-6 & 2.2711 \\
    0.0375 & 4.053e-8 & 2.0801 & 0.0375 & 2.291e-7 & 2.1291 \\
    \hline          
    \end{tabular}           
    \caption{For the LPS model, $L^2$-norm errors and convergence rates for the DtD strategy and different values of Poisson's ratio.}
    \label{tab:lps-DtD}
\end{table}

\begin{table}[]
    \centering
    \begin{tabular}{|c|c|c|c|c|c|}
    \hline
    \multicolumn{3}{|c|}{$\nu=0.3$} & \multicolumn{3}{c|}{$\nu=0.49$}\\
    \hline   
    $\delta$ & $e_0$ &  rate 
    & $\delta$ & $e_0$ &  rate \\
    \hline
    0.3 & 7.651e-6 &  -- & 0.3 & 2.460e-4 & -- \\
    0.15 & 2.025e-6  & 1.9179  & 0.15 & 7.133e-5 & 1.7863  \\
    0.075 & 4.900e-7   & 2.0470  & 0.075 & 1.694e-5 & 2.0737 \\
    0.0375 &  1.111e-7 & 2.1412 & 0.0375 & 3.824e-6 & 2.1478 \\
    \hline                           
    \end{tabular}           
    \caption{For the LPS model, $L^2$-norm errors and convergence rates for the DtN strategy and different values of Poisson's ratio.}
    \label{tab:lps-DtN}
\end{table}

\section{Conclusion}\label{sec:conclusion}
In this work we introduced a technique to automatically convert local boundary conditions into nonlocal volume constraints. A first approximation to the nonlocal solution is provided by the computation of the corresponding local solution, for which local boundary data are available. The local solution is then used to define either a Dirichlet or Neumann nonlocal volume constraints. The latter guarantee that the nonlocal problem is well-posed and that its corresponding solution is physically consistent, i.e. it converges quadratically to the local solution as the nonlocality vanishes. Our conversion method does not have any geometry or dimensionality constraints and is inexpensive compared to the computational cost incurred in when solving nonlocal problems. The theoretical quadratic convergence with respect to the horizon $\delta$ is illustrated by several two-dimensional numerical experiments conducted by meshfree discretization. The consistency, convergence and effectiveness of our approach is demonstrated for both scalar nonlocal Poisson's problems and for nonlocal mechanics problems (namely for the linear peridynamic solid model). 

This work sets the groundwork for the deployment of nonlocal models at the engineering and industry level where the use of such models is often hindered by the technical difficulties that arise when dealing with the lack of volume constraints necessary for the well-posedness and numerical solution of nonlocal equations.

\section*{Acknowledgments}
M. D'Elia is supported by by the Sandia National Laboratories Laboratory Directed Research and Development program. Sandia National Laboratories is a multi-mission laboratory managed and operated by National Technology and Engineering Solutions of Sandia, LLC., a wholly owned subsidiary of Honeywell International, Inc., for the U.S. Department of Energy’s National Nuclear Security Administration under contract DE-NA0003525. Y. Yu would like to acknowledge support by the National Science Foundation under award DMS 1753031 and Lehigh's High Performance Computing systems for providing computational resources at Sol. X.

This paper, SAND2021-7745 R, describes objective technical results and analysis. Any subjective views or opinions that might be expressed in the paper do not necessarily represent the views of the U.S. Department of Energy or the United States Government. 

\bibliographystyle{siamplain}
\bibliography{references}

\begin{thebibliography}{10}

\bibitem{Buades2010}
{\sc A.~A.~Buades, B.~Coll, and J.~Morel}, {\em Image denoising methods. a new
  nonlocal principle}, SIAM Review, 52 (2010), pp.~113--147.

\bibitem{Alali2015}
{\sc B.~Alali and M.~Gunzburger}, {\em Peridynamics and material interfaces},
  Journal of Elasticity, 120 (2015), pp.~225--248.

\bibitem{Aulisa2021}
{\sc E.~Aulisa, G.~Capodaglio, A.~Chierici, and M.~D'Elia}, {\em Efficient
  quadrature rules for finite element discretizations of nonlocal equations}.
\newblock arXiv preprint arXiv:2101.0882, 2021.

\bibitem{Benson2000}
{\sc D.~Benson, S.~Wheatcraft, and M.~Meerschaert}, {\em Application of a
  fractional advection-dispersion equation}, Water Resources Research, 36
  (2000), pp.~1403--1412.

\bibitem{bobaru2009convergence}
{\sc F.~Bobaru, M.~Yang, L.~F. Alves, S.~A. Silling, E.~Askari, and J.~Xu},
  {\em Convergence, adaptive refinement, and scaling in 1d peridynamics},
  International Journal for Numerical Methods in Engineering, 77 (2009),
  pp.~852--877.

\bibitem{bower2009applied}
{\sc A.~F. Bower}, {\em Applied mechanics of solids}, CRC press, 2009.

\bibitem{Burch2014}
{\sc N.~Burch, M.~D'Elia, and R.~Lehoucq}, {\em The exit-time problem for a
  markov jump process}, The European Physical Journal Special Topics, 223
  (2014), pp.~3257--3271.

\bibitem{Burkovska2021Learning}
{\sc O.~Burkovska, C.~Glusa, and M.~D'Elia}, {\em An optimization-based
  approach to parameter learning for fractional type nonlocal models},
  Computers and Mathematics with Applications,  (2021).
\newblock in print.

\bibitem{Burkovska2021}
{\sc O.~Burkovska and M.~Gunzburger}, {\em On a nonlocal cahn-hilliard model
  permitting sharp interfaces}, Mathematical Models and Methods in Applied
  Sciences,  (2021).
\newblock in print.

\bibitem{Capodaglio2020}
{\sc G.~Capodaglio, M.~D'Elia, P.~Bochev, and M.~Gunzburger}, {\em An
  energy-based coupling approach to nonlocal interface problems}, Computers and
  Fluids, 207 (2019), p.~104593.

\bibitem{Capodaglio2021DD}
{\sc G.~Capodaglio, M.~D'Elia, M.~Gunzburger, P.~Bochev, M.~Klar, and
  C.~Vollmann}, {\em A general framework for substructuring-based domain
  decomposition methods for models having nonlocal interactions}.
\newblock arXiv preprint arXiv:2008.11780, 2021.

\bibitem{DiLeoni2021}
{\sc P.~Clark Di~Leoni, T.~A. Zaki, G.~Karniadakis, and C.~Meneveau}, {\em
  Two-point stress–strain-rate correlation structure and non-local eddy
  viscosity in turbulent flows}, Journal of Fluid Mechanics, 914 (2021), p.~A6,
  \url{https://doi.org/10.1017/jfm.2020.977}.

\bibitem{Defterli2015}
{\sc O.~Defterli, M.~D'Elia, Q.~Du, M.~Gunzburger, R.~Lehoucq, and M.~M.
  Meerschaert}, {\em Fractional diffusion on bounded domains}, Fractional
  Calculus and Applied Analysis, 18 (2015), pp.~342--360.

\bibitem{Delgoshaie2015}
{\sc A.~Delgoshaie, D.~Meyer, P.~Jenny, and H.~Tchelepi}, {\em Non-local
  formulation for multiscale flow in porous media}, Journal of Hydrology, 531
  (2015), pp.~649--654.

\bibitem{DEliaCoupling}
{\sc M.~D'Elia, P.~Bochev, D.~Littlewood, and M.~Perego}, {\em
  Optimization-based coupling of local and nonlocal models: Applications to
  peridynamics}, Chapter in Handbook of nonlocal continuum mechanics for
  materials and structures,  (2017).

\bibitem{DElia-ACTA-2020}
{\sc M.~D'Elia, Q.~Du, C.~Glusa, M.~Gunzburger, X.~Tian, and Z.~Zhou}, {\em
  Numerical methods for nonlocal and fractional models}, Acta Numerica,
  (2020).
\newblock To appear.

\bibitem{DElia2017}
{\sc M.~D'Elia, Q.~Du, M.~Gunzburger, and R.~Lehoucq}, {\em Nonlocal
  convection-diffusion problems on bounded domains and finite-range jump
  processes}, Computational Methods in Applied Mathematics, 29 (2017),
  pp.~71--103.

\bibitem{DElia2020Helmholtz}
{\sc M.~D'Elia, C.~Flores, X.~Li, P.~Radu, and Y.~Yu}, {\em Helmholtz-{H}odge
  decompositions in the nonlocal framework. well-posedness analysis and
  applications}, Journal of Peridynamics and Nonlocal Modeling, 2 (2020),
  pp.~401--418.

\bibitem{Gulian2021}
{\sc M.~D'Elia and M.~Gulian}, {\em Analysis of anisotropic nonlocal diffusion
  models: Well-posedness of fractional problems for anomalous transport}, arXiv
  preprint arXiv:2101.04289,  (2021).

\bibitem{DElia2020Unified}
{\sc M.~D'Elia, M.~Gulian, H.~Olson, and G.~E. Karniadakis}, {\em A unified
  theory of fractional, nonlocal, and weighted nonlocal vector calculus}.
\newblock arXiv:2005.07686, 2020.

\bibitem{DElia2016ParamControl}
{\sc M.~D'Elia and M.~Gunzburger}, {\em Identification of the diffusion
  parameter in nonlocal steady diffusion problems}, Applied Mathematics and
  Optimization, 73 (2016), pp.~227--249.

\bibitem{DEliaFEM2020}
{\sc M.~D'Elia, M.~Gunzburger, and C.~Vollman}, {\em A cookbook for finite
  element methods for nonlocal problems, including quadrature rule choices and
  the use of approximate neighborhoods}, M3AS,  (2020).
\newblock in print.

\bibitem{DElia2021Imaging}
{\sc M.~D'Elia, J.~D. los Reyes, and A.~Trujillo}, {\em Bilevel parameter
  optimization for nonlocal image denoising model}, Journal of Mathematical
  Imaging and Vision,  (2021).
\newblock in print.

\bibitem{DEliaNeumann2019}
{\sc M.~D'Elia, X.~Tian, and Y.~Yu}, {\em A physically-consistent, flexible and
  efficient strategy to convert local boundary conditions into nonlocal volume
  constraints}, Accepted for publication in {\it SIAM Journal of Scientific
  Computing},  (2020).

\bibitem{Du2012}
{\sc Q.~Du, M.~Gunzburger, R.~Lehoucq, and K.~Zhou}, {\em Analysis and
  approximation of nonlocal diffusion problems with volume constraints}, SIAM
  Review, 54 (2012), pp.~667--696, \url{https://doi.org/10.1137/110833294},
  \url{http://dx.doi.org/10.1137/110833294}.

\bibitem{Du2013}
{\sc Q.~Du, M.~Gunzburger, R.~B. Lehoucq, and K.~Zhou}, {\em A nonlocal vector
  calculus, nonlocal volume--constrained problems, and nonlocal balance laws},
  Mathematical Models and Methods in Applied Sciences, 23 (2013), pp.~493--540,
  \url{http://www.worldscientific.com/doi/abs/10.1142/S0218202512500546}.

\bibitem{emmrich2007well}
{\sc E.~Emmrich, O.~Weckner, et~al.}, {\em On the well-posedness of the linear
  peridynamic model and its convergence towards the navier equation of linear
  elasticity}, Communications in Mathematical Sciences, 5 (2007), pp.~851--864.

\bibitem{fan2021asymptotically}
{\sc Y.~Fan, X.~Tian, X.~Yang, X.~Li, C.~Webster, and Y.~Yu}, {\em An
  asymptotically compatible probabilistic collocation method for randomly
  heterogeneous nonlocal problems}, preprint,  (2021).

\bibitem{Fife2003}
{\sc P.~Fife}, {\em Some nonclassical trends in parabolic and parabolic-like
  evolutions}, Springer-Verlag, New York, 2003, ch.~Vehicular Ad Hoc Networks,
  pp.~153--191.

\bibitem{Foss2021}
{\sc M.~Foss, P.~Radu, and Y.~Yu}, {\em Convergence analysis and numerical
  studies for linearly elastic peridynamics with dirichlet-type boundary
  conditions}, preprint,  (2021).

\bibitem{Gilboa2007}
{\sc G.~Gilboa and S.~Osher}, {\em Nonlocal linear image regularization and
  supervised segmentation}, Multiscale Model. Simul., 6 (2007), pp.~595--630.

\bibitem{Ha2011}
{\sc Y.~D. Ha and F.~Bobaru}, {\em Characteristics of dynamic brittle fracture
  captured with peridynamics}, Engineering Fracture Mechanics, 78 (2011),
  pp.~1156--1168.

\bibitem{lehoucq2008force}
{\sc R.~B. Lehoucq and S.~A. Silling}, {\em Force flux and the peridynamic
  stress tensor}, Journal of the Mechanics and Physics of Solids, 56 (2008),
  pp.~1566--1577.

\bibitem{Meerschaert2012}
{\sc M.~Meerschaert and A.~Sikorskii}, {\em Stochastic models for fractional
  calculus}, Studies in mathematics, Gruyter, 2012.

\bibitem{mengesha2013analysis}
{\sc T.~Mengesha and Q.~Du}, {\em Analysis of a scalar nonlocal peridynamic
  model with a sign changing kernel}, Discrete \& Continuous Dynamical
  Systems-B, 18 (2013), pp.~1415--1437.

\bibitem{DuMengesha}
{\sc T.~Mengesha and Q.~Du}, {\em The bond-based peridynamic system with
  {D}irichlet-type volume constraint}, Proc. Roy. Soc. Edinburgh Sect. A, 144
  (2014), pp.~161--186, \url{https://doi.org/10.1017/S0308210512001436},
  \url{http://0-dx.doi.org.library.unl.edu/10.1017/S0308210512001436}.

\bibitem{mengesha14Navier}
{\sc T.~Mengesha and Q.~Du}, {\em Nonlocal constrained value problems for a
  linear peridynamic navier equation}, Journal of Elasticity, 116 (2014),
  pp.~27--51.

\bibitem{MeKl00}
{\sc R.~Metzler and J.~Klafter}, {\em The random walk's guide to anomalous
  diffusion: a fractional dynamics approach}, Physics Reports, 339 (2000),
  pp.~1--77.

\bibitem{Pang2020npinns}
{\sc G.~Pang, M.~D'Elia, M.~Parks, and G.~E. Karniadakis}, {\em n{PINN}s:
  nonlocal {P}hysics-{I}nformed {N}eural {N}etworks for a parametrized nonlocal
  universal {L}aplacian operator. {A}lgorithms and {A}pplications}, Journal of
  Computational Physics, 422 (2020), p.~109760.

\bibitem{Pang2019fPINNs}
{\sc G.~Pang, L.~Lu, and G.~E. Karniadakis}, {\em f{PINN}s: Fractional
  physics-informed neural networks}, {SIAM} {J}ournal on {S}cientific
  {C}omputing, 41 (2019), pp.~A2603--A2626.

\bibitem{Pasetto2019}
{\sc M.~Pasetto}, {\em Enhanced Meshfree Methods for Numerical Solution of
  Local and Nonlocal Theories of Solid Mechanics}, PhD thesis, UC San Diego,
  2019.

\bibitem{Schekochihin2008}
{\sc A.~Schekochihin, S.~Cowley, and T.~Yousef}, {\em Mhd turbulence: Nonlocal,
  anisotropic, nonuniversal?}, in In IUTAM Symposium on computational physics
  and new perspectives in turbulence, Springer, Dordrecht, 2008, pp.~347--354.

\bibitem{Schumer2003}
{\sc R.~Schumer, D.~Benson, M.~Meerschaert, and B.~Baeumer}, {\em Multiscaling
  fractional advection-dispersion equations and their solutions}, Water
  Resources Research, 39 (2003), pp.~1022--1032.

\bibitem{Schumer2001}
{\sc R.~Schumer, D.~Benson, M.~Meerschaert, and S.~Wheatcraft}, {\em Eulerian
  derivation of the fractional advection-dispersion equation}, Journal of
  Contaminant Hydrology, 48 (2001), pp.~69--88.

\bibitem{Seleson2013}
{\sc P.~Seleson, M.~Gunzburger, and M.~L. Parks}, {\em Interface problems in
  nonlocal diffusion and sharp transitions between local and nonlocal domains},
  Computer Methods in Applied Mechanics and Engineering, 266 (2013),
  pp.~185--204.

\bibitem{Silling2000}
{\sc S.~Silling}, {\em Reformulation of elasticity theory for discontinuities
  and long-range forces}, Journal of the Mechanics and Physics of Solids, 48
  (2000), pp.~175--209, \url{https://doi.org/10.1016/S0022-5096(99)00029-0}.

\bibitem{Silling2005meshfree}
{\sc S.~A. Silling and E.~Askari}, {\em A meshfree method based on the
  peridynamic model of solid mechanics}, Computers \& structures, 83 (2005),
  pp.~1526--1535.

\bibitem{Tian2014}
{\sc X.~Tian and Q.~Du}, {\em Asymptotically compatible schemes and
  applications to robust discretization of nonlocal models}, SIAM Journal on
  Numerical Analysis, 52 (2014), pp.~1641--1665.

\bibitem{trask2019asymptotically}
{\sc N.~Trask, H.~You, Y.~Yu, and M.~L. Parks}, {\em An asymptotically
  compatible meshfree quadrature rule for nonlocal problems with applications
  to peridynamics}, Computer Methods in Applied Mechanics and Engineering, 343
  (2019), pp.~151--165.

\bibitem{Wang2010}
{\sc H.~Wang, K.~Wang, and T.~Sircar}, {\em A direct {$\mathcal O(N\log^2\!N)$}
  finite difference method for fractional diffusion equations}, Journal of
  Computational Physics, 229 (2010), pp.~8095--8104.

\bibitem{Xu2021}
{\sc X.~Xu, M.~D'Elia, and J.~Foster}, {\em A machine-learning framework for
  peridynamic material models with physical constraints}.
\newblock arXiv:2101.01095, 2021.

\bibitem{XuFETI2021}
{\sc X.~Xu, C.~Glusa, M.~D'Elia, and J.~Foster}, {\em A feti approach to domain
  decomposition for meshfree discretizations of nonlocal problems}.
\newblock arXiv preprint arXiv:2105.07309, 2021.

\bibitem{You_2019}
{\sc H.~You, X.~Lu, N.~Task, and Y.~Yu}, {\em An asymptotically compatible
  approach for neumann-type boundary condition on nonlocal problems}, ESAIM:
  Mathematical Modelling and Numerical Analysis, 54 (2020), pp.~1373--1413.

\bibitem{you2020asymptotically}
{\sc H.~You, Y.~Yu, and D.~Kamensky}, {\em An asymptotically compatible
  formulation for local-to-nonlocal coupling problems without overlapping
  regions}, Computer Methods in Applied Mechanics and Engineering, 366 (2020),
  p.~113038.

\bibitem{You2021}
{\sc H.~You, Y.~Yu, S.~Silling, and M.~D'Elia}, {\em Data-driven learning of
  nonlocal models: from high-fidelity simulations to constitutive laws}.
\newblock Accepted in AAAI Spring Symposium: MLPS, 2021.

\bibitem{You2020Regression}
{\sc H.~You, Y.~Yu, N.~Trask, M.~Gulian, and M.~D'Elia}, {\em Data-driven
  learning of robust nonlocal physics from high-fidelity synthetic data},
  Computer Methods in Applied Mechanics and Engineering,  (2020).

\bibitem{yu2018partitioned}
{\sc Y.~Yu, F.~F. Bargos, H.~You, M.~L. Parks, M.~L. Bittencourt, and G.~E.
  Karniadakis}, {\em A partitioned coupling framework for peridynamics and
  classical theory: analysis and simulations}, Computer Methods in Applied
  Mechanics and Engineering, 340 (2018), pp.~905--931.

\bibitem{yu2021asymptotically}
{\sc Y.~Yu, H.~You, and N.~Trask}, {\em An asymptotically compatible treatment
  of traction loading in linearly elastic peridynamic fracture}, Computer
  Methods in Applied Mechanics and Engineering, 377 (2021), p.~113691.

\end{thebibliography}
\end{document}